\newtheorem{theorem}{Theorem}[section]
\newtheorem{lemma}[theorem]{Lemma}
\newtheorem{prop}[theorem]{Proposition}
\newtheorem{property}[theorem]{Property}
\newtheorem{fact}[theorem]{Fact}
\theoremstyle{definition}
\newtheorem{ex}[theorem]{Example}
\newtheorem{defn}[theorem]{Definition}
\theoremstyle{remark}
\newtheorem{remark}[theorem]{Remark}
\newcommand{\p} {\ensuremath {\mathbb{P}}}
\newcommand{\E} {\ensuremath {\mathbb{E}}}
\newcommand{\N} {\ensuremath {\mathbb{N}}}
\newcommand{\R} {\ensuremath {\mathbb{R}}}
\newcommand{\Z} {\ensuremath {\mathbb{Z}}}
\newcommand{\I} {\ensuremath {\mathbb{I}}}
\newcommand{\F} {\ensuremath {\mathscr{F}}}
\newcommand{\Gi} {\ensuremath {\mathscr{G}}}
\newcommand{\Wh} {\ensuremath {\mathscr{W}}}
\newcommand{\A} {\ensuremath {\mathscr{A}}}
\newcommand{\X} {\ensuremath {\mathscr{X}}}
\newcommand{\B} {\ensuremath {\mathscr{B}}}
\newcommand{\D} {\ensuremath {\mathscr{D}}}
\newcommand{\Si} {\ensuremath {\mathscr{S}}}
\newcommand{\Ci} {\ensuremath {\mathscr{C}}}
\newcommand{\mo} {\ensuremath {\mathscr{P}}}
\newcommand{\No} {\ensuremath {\mathcal{N}}}
\newcommand{\Qi} {\ensuremath {\mathscr{Q}}}
\title[Asymptotic equivalence for additive processes]{Asymptotic equivalence for inhomogeneous jump diffusion processes and white noise.}
\author{Ester ~Mariucci}
\address{Laboratoire Jean Kuntzmann, Grenoble.}
\email{Ester.Mariucci@imag.fr}
\date{\today}
\keywords{Non-parametric experiments, Le Cam distance, asymptotic equivalence, Lévy processes, additive processes, white noise.}
\begin{document}

\begin{abstract} We prove the global asymptotic equivalence between the experiments generated by the discrete (high frequency) or continuous observation of a path of a time inhomogeneous jump-diffusion process and a Gaussian white noise experiment. Here, the parameter of interest is the drift function and the observation time $T$ can be both bounded or diverging. The approximation is given in the sense of the Le Cam $\Delta$-distance, under some smoothness conditions on the unknown drift function. These asymptotic equivalences are established by constructing explicit Markov kernels that can be used to reproduce one experiment from the other. \end{abstract}

\maketitle

\section*{Introduction}
Consider a sequence of one-dimensional time inhomogeneous jump-diffusion processes $\{X_t\}_{t\geq0}$ defined by
\begin{equation}\label{X}
 X_t=\eta+\int_0^tf(s)ds+\int_0^t\sigma_n(s)dW_s +\sum_{i=1}^{N_t}Y_i,\quad t\in [0,T_n],
\end{equation}
where:
\begin{itemize}
 \item $\eta$ is some random initial condition;
 \item $W=\{W_t\}_{t\geq 0}$ is a standard Brownian motion;
 \item $N=\{N_t\}_{t\geq 0}$ is an inhomogeneous Poisson process with intensity function $\lambda(\cdot)$, independent of $W$;
 \item $(Y_i)_{i\geq 1}$ is a sequence of i.i.d. real random variables with distribution $G$, independent of $W$ and $N$;
 \item $\sigma_n^2(\cdot)$ is supposed to be known. Either $T_n\to\infty$ and $\sigma_n(\cdot)=\sigma(\cdot)$ does not depend on $n$ or $T_n\equiv T$ and $\sigma_n(\cdot)=\varepsilon_n\sigma(\cdot)$ with $\varepsilon_n\to 0$ as $n\to\infty$.
 \item $f(\cdot)$ belongs to some non-parametric class $\F$ making its estimation consistent (e.g. if $T_n\to\infty$ one may require $\F$ to consist of a subclass of periodic functions).
 \item $\lambda(\cdot)$ and $G(\cdot)$ are unknown and belong to non-parametric classes $\Lambda$ and $\Gi$, respectively.
\end{itemize}
We observe $\{X_t\}_{t\geq 0}$ at discrete times $0=t_0<t_1<\dots<t_n=T_n$ such that $\Delta_n=\max_{1\leq i\leq n}\big\{|t_{i}-t_{i-1}|\big\}\downarrow 0$ as $n$ goes to infinity. We are interested in estimating the drift function $f(\cdot)$ from the discrete data $(X_{t_i})_{i=0}^n$.
At least two natural questions arise:
\begin{enumerate}
 \item How much information about the parameter $f(\cdot)$ do  we lose by observing $(X_{t_i})_{i=0}^n$ instead of $\{X_t\}_{t\in[0,T_n]}$?
 \item Can we construct an easier (read: mathematically more tractable), but equivalent, model from $(X_{t_i})_{i=0}^n$?
 \end{enumerate}
The aim of this paper is to give an answer to questions (1) and (2) by means of the Le Cam theory of statistical experiments. For the basic concepts and a detailed description of the notion of asymptotic equivalence that we shall adopt, we refer to \cite{LeCam,LC2000}. We recall the relevant definitions and properties in Section \ref{sec:lecam}.

One of the main applications of proving an asymptotic equivalence between two sequences of experiments is that it allows to transfer asymptotic risk bounds for any inference problem from one model to the other, at least for bounded loss functions. In particular, if there is an estimator $\tau_1$ in the statistical model $\mo_1=(\X_1,\A_1,\{P_{1,\theta}:\theta\in\Theta\})$ with risk $\int L(\theta,\tau(x))P_{1,\theta}(dx)$, then, for bounded loss functions $L$, there is an estimator $\tau_2$ in $\mo_2$ such that
$$\sup_\theta \bigg|\int L(\theta,\tau_1(x))P_{1,\theta}(dx)-\int L(\theta,\tau_2(x))P_{2,\theta}(dx)\bigg|\to 0,\quad \textnormal{as } n\to \infty.$$
More generally, asymptotic equivalence allows to transfer minimax rates of convergence, up to some constants.

The first asymptotic equivalence results for non-parametric experiments date to 1996 and are due to Brown and Low \cite{BL} and Nussbaum \cite{N96}. This is the first instance of an abundance of works devoted to establishing asymptotic equivalence results for non-parametric experiments. In particular, asymptotic equivalence theory has been developed for non-parametric regression \cite{BL,regression02,GN2002,ro04,C2007,cregression,R2008,C2009,R2013}, non-parametric density estimation models \cite{N96,cmultinomial,j03,BC04}, generalized linear models \cite{GN}, time series \cite{GN2006,NM}, diffusion models \cite{D,CLN,R2006,rmultidimensionale,R11,C14,esterdiffusion}, GARCH model \cite{B}, functional linear regression \cite{M2011} 
and spectral density estimation \cite{GN2010}. Negative results are somewhat harder to come by; the most notable among them are \cite{sam96,B98,wang02}.

There is however a lack of equivalence results concerning processes with jumps. To our knowledge, this is the first one for what concerns the estimation of a drift function issued from a discretely (high frequency) observed Lévy process. We actually allow it to be inhomogeneous in time, i.e. an additive process. In this setting one should also cite the works \cite{nostro,duval} as they are the only ones we know about treating (pure jumps) Lévy processes. However, they both give asymptotic results for the estimation of the Lévy measure.

The interest in Lévy processes is due to them being a building block for stochastic continuous time models with jumps. Because of that, they are widely used in finance, queueing, telecommunications, extreme value theory, quantum theory or biology. Their stationarity property, however, makes them rather inflexible; as a consequence, in recent years additive processes have been preferred in financial modelling (see \cite{tankov}, Chapter 14). It is therefore in this more general setting that we present our results. 

In order to mathematically reformulate questions (1) and (2), let us denote by $(D,\D)$ the Skorokhod space; define $P_{T_n}^{(f,\sigma_n^2,\lambda G)}$ as the law of $\{X_t\}_{t\in[0,T_n]}$ on $(D,\D)$ and $Q_n^{(f,\sigma_n^2,\lambda G)}$ as the law of the vector $(X_{t_0}, X_{t_1},\dots,X_{t_n})$ on $(\R^{n+1},\B(\R^{n+1}))$.

Consider the parameter set $\Theta=\F$. We allow two more degrees of freedom by considering $\lambda \in \Lambda$ and $G \in \Gi$, although these will not be parameters of interest. Let us then consider the following statistical models:
\begin{align*}
\mo_n&=\big(D,\D,\{P_{T_n}^{(f,\sigma_n^2,\lambda G)}:f \in \F\}\big), \\
\mathscr{Q}_n&=\big(\R^{n+1},\B(\R^{n+1}),\{Q_n^{(f,\sigma_n^2,\lambda G)}: f \in \F\}\big). 
\end{align*}
Finally, let us introduce the Gaussian model that will appear in the statement of our main results. For that, let us denote by $(C,\Ci)$ the space of continuous mappings from $[0,\infty)$ into $\R$ endowed with its standard filtration and, coherently with the previous notation, by $P_{T_n}^{(f,\sigma_n^2,0)}$ the law induced on $(C,\Ci)$ by the stochastic process:
\begin{equation}\label{eq:wn}
dy_t=f(t)dt+\sigma_n(t)dW_t, \quad y_0 = 0,\quad t\in[0,T_n].
\end{equation}
We set:
$$\Wh_n=\big(C,\Ci,(P_{T_n}^{(f,\sigma_n^2,0)}:f\in\F)\big).$$

We have already mentioned that asymptotic equivalences can be used to reduce estimation problems from one model to a simpler ones. This is what happens here, the model associated with the discrete or continuous observation of $\{X_t\}$ as in (1) has been proved to be equivalent to that in (2), which is much better studied. For example, consider the two following situations:
\begin{itemize}
 \item $T_n$ is fixed and $\sigma_n(\cdot)=\varepsilon_n\sigma(\cdot)$ with $\varepsilon_n\to 0,$
 \item $T_n$ goes to infinity and $\sigma_n(\cdot)$ is fixed; in this case, also ask that elements of $\F$ have some periodicity assumption.
\end{itemize}
In both these cases, a consistent estimation of $f \in \F$ is possible. Our equivalence result does not rely on assumptions such as these, but it applies to these cases, as well: Indeed, proving equivalence for a class $\F$ automatically implies that the same equivalence holds true for any subclass of $\F$.

We state here our main result in the case in which $\F$ is a functional class consisting of $\alpha$-Hölder, uniformly bounded functions on $\R$, i.e. there exist $B<\infty$, $M<\infty$ and $\alpha\in(0,1]$ such that
$$|f(x)|\leq B \text{ and } |f(x)-f(y)|\leq M|x-y|^{\alpha},\quad \forall x,y\in\R.$$
For the general statements see Section \ref{sec:mainresults}.
\begin{theorem}\label{teo}
 Suppose that $\F$ is a subclass of $\alpha$-Hölder, uniformly bounded functions on $\R$. Let $\sigma_n(\cdot)=\varepsilon_n \sigma(\cdot)$ be such that $0 < m_\sigma \leq \sigma(\cdot) \leq M_\sigma < \infty$  with derivative $\sigma'(\cdot)$ in $L_{\infty}(\R)$. Suppose either:
 \begin{itemize}
  \item $T_n \equiv T < \infty$, $\varepsilon_n \to 0$ and there exists an $L_2 < \infty$ such that for all $\lambda \in \Lambda$, $\|\lambda\|_{L_2([0,T])} < L_2$,
  \item or $T_n \to \infty$, $\varepsilon_n \equiv 1$ and there exist $L_1<\infty$, $L_2 < \infty$ such that for all $\lambda \in \Lambda$, $\|\lambda\|_{L_1(\R)} < L_1$ and $\|\lambda\|_{L_2(\R)} < L_2$.
 \end{itemize}
 Then
 $$\Delta(\mathscr{Q}_n,\Wh_n) \to 0\quad\text{and} \quad  \Delta(\mo_n,\mathscr{Q}_n)\to 0 \text{ as } n\to\infty,$$
 as soon as one of the following two conditions holds
 \begin{enumerate}
  \item $\Gi$ is a subclass of discrete distributions with support on $\Z$: In this case an upper bound for the rate of convergence is $O\Big(\sqrt{\Delta_n}+T_n\Delta_n^{2\alpha}\varepsilon_n^{-2} + T_n\Delta_n\Big)$.
  \item $\Gi$ is a subclass of absolutely continuous distributions with respect to the Lebesgue measure on $\R$ with uniformly bounded densities on a fixed neighborhood of $0$: In this case an upper bound for the rate of convergence is $O\Big(\sqrt[4]{\Delta_n}+T_n\Delta_n^{2\alpha}\varepsilon_n^{-2} + T_n\Delta_n\Big)$.
 \end{enumerate}
\end{theorem}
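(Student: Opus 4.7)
The plan is to realise each of the two $\Delta$-bounds by explicit Markov kernels, splitting the argument according to the two independent sources of approximation: the discretisation step (full path $\leftrightarrow$ discrete skeleton) and the Gaussianisation step (jump-diffusion $\leftrightarrow$ Brownian drift). Throughout I would keep the nuisance parameters $(\lambda, G) \in \Lambda \times \Gi$ fixed and aim for bounds uniform over the class. A running observation that organises the per-bin analysis is that, because $\Delta_n \to 0$, the probability of two or more jumps on any single $I_i = [t_{i-1}, t_i]$ is $O(\lambda\Delta_n)^2$, so up to a global $O(T_n\Delta_n)$ contribution we may reason as if at most one jump occurs per bin.

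For $\Delta(\mo_n, \mathscr{Q}_n)$, the projection $\omega \mapsto (\omega(t_i))_i$ yields $\delta(\mo_n, \mathscr{Q}_n) = 0$ for free. In the reverse direction, given the skeleton $(x_0,\dots,x_n)$, I would fill in each gap $I_i$ by an independent bridge of the drift-free additive process $\int_0^\cdot\sigma_n(s)dW_s + J_\cdot$ conditioned on endpoints $x_{i-1}, x_i$. Because the bridge law of an additive process does not see the drift, only a Girsanov correction reinserting $f$ remains; the summed KL-cost is $O(T_n\Delta_n^{2\alpha}\varepsilon_n^{-2})$ by H\"older regularity, to which one adds the $O(T_n\Delta_n)$ term from the rare event of multiple jumps.

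For $\Delta(\mathscr{Q}_n, \Wh_n)$ I would factor through an intermediate discretised Gaussian experiment $\widetilde{\Wh}_n$ that observes the Gaussian vector with increments $f(t_{i-1})\Delta_i + \sigma_n(t_{i-1})(W_{t_i}-W_{t_{i-1}})$. The comparison $\widetilde{\Wh}_n \leftrightarrow \Wh_n$ is a classical Brown--Low argument whose cost is again $O(T_n\Delta_n^{2\alpha}\varepsilon_n^{-2})$ via a Cameron--Martin bound on replacing the integrated drift by its left-endpoint Riemann sum. The heart of the matter is the kernel $\mathscr{Q}_n \to \widetilde{\Wh}_n$, which must digest the compound Poisson contribution of each increment. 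In Case~(1), with $G$ supported on $\Z$ and Gaussian standard deviation $\varepsilon_n\sigma\sqrt{\Delta_n} \ll 1$, mapping $\Delta X_i$ to its distance to the nearest integer recovers the Gaussian part exactly off an event of exponentially small probability; Hellinger-tensorisation over the $n$ bins contributes the $\sqrt{\Delta_n}$ term. In Case~(2), exact separation is impossible, and one would instead bound directly the per-interval Hellinger distance between the actual mixture $(1-\lambda\Delta_i)\phi_{\sigma_n^2\Delta_i}(\cdot - f\Delta_i) + \lambda\Delta_i\,(\phi_{\sigma_n^2\Delta_i}*G)(\cdot - f\Delta_i)$ and a pure Gaussian, the bounded density of $G$ near $0$ giving a one-step Hellinger of order $\sqrt{\Delta_n}$ that tensorises and square-roots to the $\sqrt[4]{\Delta_n}$ rate.

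The principal obstacle is the Gaussianisation step in Case~(2): designing a \emph{universal}, nuisance-free kernel that absorbs the jumps into the Gaussian noise, together with a sharp one-step Hellinger bound that is uniform in $(\lambda, G)$. The bounded-density hypothesis is precisely what permits polynomial control of this Hellinger distance, and carrying the uniform constants through the $L_1, L_2$ bounds on $\lambda$ and the density bound on $G$ requires a careful explicit convolution estimate for Gaussian kernels against $G$. Once this uniform one-step bound is in place, aggregation via the tensorisation inequality for $\Delta$ and the triangle inequality delivers both announced statements with the claimed rates.
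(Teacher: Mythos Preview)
Your overall architecture is close to the paper's, but there are two points where the paper proceeds differently, and the second is a genuine gap in your plan.

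For $\Delta(\mo_n,\Qi_n)$ you propose to fill in jump--diffusion bridges between the skeleton points and then pay a Girsanov cost for reinserting the drift. The paper bypasses this entirely: the Radon--Nikodym derivative $dP^{(f,\sigma_n^2,\lambda G)}/dP^{(0,\sigma_n^2,\lambda G)}$ depends on the path $\omega$ only through its continuous part $\omega^c$, so $\omega\mapsto\omega^c$ is a sufficient statistic and $\Delta(\mo_n,\Wh_n)=0$ \emph{exactly}. Hence $\Delta(\mo_n,\Qi_n)=\Delta(\Wh_n,\Qi_n)$ with no separate argument needed. Your bridge route may be workable, but it forces you to analyse bridges of the jump component (which depend on the nuisances $\lambda,G$), whereas the paper's sufficiency argument is a one-liner.

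In Case~(2) you write that one should ``bound directly the per-interval Hellinger distance between the actual mixture \ldots\ and a pure Gaussian''. This does not work without first applying a kernel. On step $i$ the mixture is $(1-\alpha_i)\phi_{\sigma_i^2}+\alpha_i(\phi_{\sigma_i^2}*G)$, and nothing prevents $G$ from placing most of its mass far from $0$, so $\|\phi_{\sigma_i^2}*G-\phi_{\sigma_i^2}\|_{TV}$ is of order one; the per-step total variation is therefore $O(\alpha_i)$, the sum of squared Hellinger distances is $O\big(\sum_i\alpha_i\big)=O(\|\lambda\|_1)$, and this does not vanish. The bounded-density hypothesis near $0$ controls how much mass $G$ has \emph{near} zero, not how close $G$ is to a Dirac at zero, so it cannot rescue a raw comparison. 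What the paper actually does is apply an explicit threshold-and-reset kernel: if $|x|\le \beta_i:=L+\sigma_i^{1-\varepsilon}$ keep $x$, otherwise replace $x$ by an independent $\No(0,\sigma_i^2)$ sample. After this kernel the remaining errors are (i) a Gaussian tail beyond $\beta_i$ (exponentially small), (ii) a jump landing inside $[-2\beta_i,2\beta_i]$ and hence undetected, and (iii) the recentring error $\alpha_i|m_i|/\sigma_i$ from resetting to mean $0$. It is precisely for (ii) that the bounded density of $G$ near $0$ is invoked, giving $\alpha_i\int_{-2\beta_i}^{2\beta_i}h(y)\,dy=O(\alpha_i\beta_i)$; choosing $\varepsilon=\tfrac12$ and tensorising yields the $\Delta_n^{1/4}$ rate. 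You correctly identify this step as the principal obstacle, but the missing idea is that a jump-detection kernel must be applied \emph{before} any Hellinger comparison---a direct bound on the raw increments cannot succeed.
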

The paper is organized as follows. Sections \ref{ap} to \ref{sec:parameterspace} fix assumptions and notation.  The main results, as well as examples, are given in Section \ref{sec:mainresults}. A discussion of the results can be found in Section \ref{sec:discussion}. The proofs are postponed to Section \ref{sec:proofs}. They are obtained as a sequence of results proving different (asymptotic) equivalences. Loosely speaking, we first reduce to having in each interval of the discretization at most one jump (Bernoulli approximation, Section \ref{sec:bernoulli}). Secondly, we filter it out via an explicit Markov kernel, reducing ourselves to treating independent Gaussian variables (Section \ref{sec:Markov}). Finally, we apply an argument similar to that in \cite{BL} (Section \ref{sec:bl}) and collect all the pieces to conclude the proofs in Section \ref{sec:fine}. An appendix collects some proofs of general facts about the Le Cam distance that we use in the rest of the paper.

\section{Assumptions and main results}
\subsection{Additive processes}\label{ap}
Time inhomogeneous jump-diffusion processes are a special case of additive processes. Here we briefly recall definitions and properties of this class of processes. 
\begin{defn}
A stochastic process $\{X_t\}_{t\geq 0}$ on $\R$ defined on a probability space $(\Omega,\A,\p)$ is an \emph{additive process} if the following conditions are satisfied.
\begin{enumerate}
\item $X_0=0$ $\p$-a.s.
\item Independent increments: for any choice of $n\geq 1$ and $0\leq t_0<t_1<\ldots<t_n$, random variables $X_{t_0}$, $X_{t_1}-X_{t_0},\dots ,X_{t_n}-X_{t_{n-1}}$ are independent.
\item There is $\Omega_0\in \A$ with $\p(\Omega_0)=1$ such that, for every $\omega\in \Omega_0$, $X_t(\omega)$ is right-continuous in $t\geq 0$ and has left limits in $t>0$.
\item Stochastic continuity:  $\forall\varepsilon>0, \p(|X_{t+h}-X_t|\geq \varepsilon)\to 0$ as $h\to 0$.
\end{enumerate}
\end{defn}
Thanks to the \emph{Lévy-Khintchine formula} (see \cite{tankov}, Theorem 14.1), the characteristic function of any additive process $X=\{X_t\}_{t\in[0,T]}$ can be expressed, for all $u$ in $\R$, as:
\begin{equation}\label{caratteristica}
\E\big[e^{iuX_t}\big]=\exp\Big(iu\int_0^t f(r)dr-\frac{u^2}{2}\int_0^t \sigma^2(r)dr-\int_{\R}(1-e^{iuy}+iuy\I_{\vert y\vert \leq 1})\nu_t(dy)\Big), 
\end{equation}
where $f(\cdot)$ and $\sigma^2(\cdot)$ belongs to $L_1(\R)$ and $\nu_t$ is a positive measure on $\R$ satisfying
$$\nu_t(\{0\})=0 \textnormal{ and } \int_{\R}(y^2\wedge 1)\nu_t(dy)<\infty, \quad \forall t\in[0,T]. $$
In the sequel we shall refer to $(f(t),\sigma^2(t),\nu_t)_{t\in[0,T]}$ as the local characteristics of the process $X$ and a $\nu_t$ as above will be called a \emph{Lévy measure}, for all $t$.
This data characterizes uniquely the law of the process $X$. In the case where $f(\cdot)$ and $\sigma(\cdot)$ are constant functions and $\nu_t=\nu$ for all $t$, the process $X$ satisfying \eqref{caratteristica} is stationary, and is called a \emph{Lévy process} of characteristic triplet $(f,\sigma^2,\nu)$.

Let $D=D([0,\infty),\R)$ be the space of mappings $\omega$ from $[0,\infty)$ into $\R$ that are right-continuous with left limits. Define the \emph{canonical process} $x:D\to D$ by 
$$\forall \omega\in D,\quad x_t(\omega)=\omega_t,\;\;\forall t\geq 0.$$

Let $\D_t$ and $\D$ be the $\sigma$-algebras generated by $\{x_s:0\leq s\leq t\}$ and $\{x_s:0\leq s<\infty\}$, respectively. Let $X$ be an additive process defined on $(\Omega,\A,\p)$ having local characteristics $(f(t),\sigma^2(t),\nu_t)_{t\in[0,T]}$. It is well known that it induces a probability measure $P^{(f,\sigma^2,\nu)}$ on $(D,\D)$ such that $\{x_t\}$ defined on $\big(D,\D,P^{(f,\sigma^2,\nu)}\big)$ is an additive process identical in law with 
$(\{X_t\},\p)$ (that is the local characteristics of $\{x_t\}$ under $P^{(f,\sigma^2,\nu)}$ is $(f(t),\sigma^2(t),\nu_t)_{t\geq 0}$).

In the sequel we will denote by $\big(\{x_t\},P^{(f,\sigma^2,\nu)}\big)$ such an additive process, stressing the probability measure and by $P_t^{(f,\sigma^2,\nu)}$ for the restriction of $P^{(f,\sigma^2,\nu)}$ to $\D_t$.
 
Further, for every function $\omega$ in $D$, we will denote by $\Delta \omega_r$ its jump at the time $r$ and by $\omega^c$, $\omega^d$ its continuous 
and discontinuous part, respectively:

$$\Delta \omega_r = \omega_r - \lim_{s \uparrow r} \omega_s,\ \omega_t^d=\sum_{r \leq t}\Delta \omega_r, \ \omega_t^c=\omega_t-\omega_t^d.$$
Note that, if $\nu_t=0$ for all $t\geq 0$, then $\big(\{x_t\},P^{(f,\sigma^2,0)}\big)$ is a Gaussian process that can be represented on $(\Omega,\A,\p)$ as
\begin{equation}\label{eq:Xcont}
 X_t=\int_0^t f(s)ds+\int_0^t\sigma(s)dW_s,\quad t\geq 0,
\end{equation}
for some standard Brownian motion $W$ on $(\Omega,\A,\p)$. 

A time inhomogeneous jump-diffusion process as in \eqref{X}, observed until the time $T_n$, is an additive process (apart from the possibly non-zero initial condition) with local characteristics $(f(t),\sigma^2(t),\nu_t)_{t\in[0,T_n]}$, where $\nu_t(\cdot)=\lambda(t)G(\cdot)$. We will write $\big(\{x_t\}, P_{T_n}^{(f,\sigma^2,\lambda G)}\big)$ for such a process. Also observe that $\big(\{x_t^c\}, P_{T_n}^{(f,\sigma^2,\lambda G)}\big)$ has the same law as $\big(\{x_t\}, P_{T_n}^{(f,\sigma^2,0)}\big)$. Moreover, thanks to the independence of the increments, the law of the $i$-th increment of \eqref{X} is the convolution product between the Gaussian law $\No\Big(\int_{t_{i-1}}^{t_i}f(s)ds,\int_{t_{i-1}}^{t_i}\sigma^2(s)ds\Big)$ and the law of the variable $\sum_{j=1}^{P_i}Y_j$, where $P_i$ is Poisson of intensity $\lambda_i=\int_{t_{i-1}}^{t_i}\lambda(s)ds$.
\subsection{Le Cam theory of statistical experiments}\label{sec:lecam}
A \emph{statistical model} is a triplet $\mo_j=(\X_j,\A_j,\{P_{j,\theta}; \theta\in\Theta\})$ where $\{P_{j,\theta}; \theta\in\Theta\}$ 
is a family of probability distributions all defined on the same $\sigma$-field $\A_j$ over the \emph{sample space} $\X_j$ and $\Theta$ is the \emph{parameter space}.
The \emph{deficiency} $\delta(\mo_1,\mo_2)$ of $\mo_1$
with respect to $\mo_2$ quantifies ``how much information we lose'' by using $\mo_1$ instead of $\mo_2$ and is defined as
$\delta(\mo_1,\mo_2)=\inf_K\sup_{\theta\in \Theta}||KP_{1,\theta}-P_{2,\theta}||_{TV},$
 where TV stands for ``total variation'' and the infimum is taken over all ``transitions'' $K$ (see \cite{LeCam}, page 18). The general definition of transition is quite involved but, for our purposes, it is enough to know that Markov kernels are special cases of transitions.
 
The Le Cam $\Delta$-distance is defined as the symetrization of $\delta$ and it defines a pseudometric. When $\Delta(\mo_1,\mo_2)=0$  the two statistical models are said to be equivalent.
Two sequences of statistical models $(\mo_{1}^n)_{n\in\N}$ and $(\mo_{2}^n)_{n\in\N}$ are called \emph{asymptotically equivalent}
if $\Delta(\mo_{1}^n,\mo_{2}^n)$ tends to zero as $n$ goes to infinity. 
There are various techniques to bound the $\Delta$-distance. We report below only the properties that are useful for our purposes. For the proofs see, e.g., \cite{LeCam,strasser} and the Appendix.
\begin{property}\label{delta0}
 Let $\mo_j=(\X,\A,\{P_{j,\theta}; \theta\in\Theta\})$, $j=1,2$, be two statistical models having the same sample space and define 
 $\Delta_0(\mo_1,\mo_2):=\sup_{\theta\in\Theta}\|P_{1,\theta}-P_{2,\theta}\|_{TV}.$
 Then, $\Delta(\mo_1,\mo_2)\leq \Delta_0(\mo_1,\mo_2)$.
\end{property}
In particular, Property \ref{delta0} allows us to bound the $\Delta$-distance between statistical models sharing the same sample space by means of classical bounds for the total variation distance. Classical bounds on the latter will thus prove useful:
\begin{fact}[see \cite{LC69}, p. 35]
 Let $P_1$ and $P_2$ be two probability measures on $\X$, dominated by a common measure $\xi$, with densities $g_{i}=\frac{dP_{i}}{d\xi}$, $i=1,2$. Define
 \begin{align*}
  L_1(P_1,P_2)&=\int_{\X} |g_{1}(x)-g_{2}(x)|\xi(dx), \\
  H(P_1,P_2)&=\bigg(\int_{\X} \Big(\sqrt{g_{1}(x)}-\sqrt{g_{2}(x)}\Big)^2\xi(dx)\bigg)^{1/2}.
 \end{align*}
Then,
\begin{equation} \label{h}
 \frac{H^2(P,Q)}{2}\leq \|P_1-P_2\|_{TV}=\frac{1}{2}L_1(P_1,P_2)\leq H(P_1,P_2).
\end{equation}
\end{fact}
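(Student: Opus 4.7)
The plan is to establish the three relations separately, all by elementary manipulations of the densities $g_1$ and $g_2$.

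First, I would prove the identity $\|P_1-P_2\|_{TV}=\tfrac12 L_1(P_1,P_2)$ by an explicit Hahn-type decomposition, avoiding any appeal to the abstract Hahn theorem for signed measures. Setting $A^+=\{g_1\ge g_2\}$ and $A^-=\{g_1<g_2\}$, which are measurable by construction, one has $P_1(A)-P_2(A)=\int_A(g_1-g_2)\,d\xi$ for every measurable $A$. Thus this quantity is maximized by $A=A^+$, with value $\int_{A^+}(g_1-g_2)\,d\xi$, and minimized by $A=A^-$, with value $-\int_{A^-}(g_2-g_1)\,d\xi$. Since both $P_1$ and $P_2$ have total mass $1$, the two extrema have equal absolute value, and summing these absolute values yields $2\|P_1-P_2\|_{TV}=\int|g_1-g_2|\,d\xi=L_1(P_1,P_2)$.

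Next, for the upper bound $\|P_1-P_2\|_{TV}\le H(P_1,P_2)$, the key step is the algebraic factorization $|g_1-g_2|=|\sqrt{g_1}-\sqrt{g_2}|\,(\sqrt{g_1}+\sqrt{g_2})$. Applying the Cauchy--Schwarz inequality to the two factors gives $L_1(P_1,P_2)\le H(P_1,P_2)\cdot\big(\int(\sqrt{g_1}+\sqrt{g_2})^2\,d\xi\big)^{1/2}$. Expanding the second factor as $\int g_1\,d\xi+\int g_2\,d\xi+2\int\sqrt{g_1g_2}\,d\xi\le 2+2=4$, where the last estimate $\int\sqrt{g_1g_2}\,d\xi\le 1$ is a second application of Cauchy--Schwarz against the constant function $1$, one obtains $L_1\le 2H$, i.e.\ $\|P_1-P_2\|_{TV}\le H$.

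For the lower bound $H^2/2\le\|P_1-P_2\|_{TV}$, I would use the pointwise inequality $\sqrt{g_1}+\sqrt{g_2}\ge|\sqrt{g_1}-\sqrt{g_2}|$, which is immediate since $g_1,g_2\ge0$. Combined with the same factorization as above, it gives $|g_1-g_2|\ge(\sqrt{g_1}-\sqrt{g_2})^2$ pointwise, and integrating over $\X$ against $\xi$ yields $L_1(P_1,P_2)\ge H^2(P_1,P_2)$, hence the claim after dividing by $2$. There is no real obstacle in this proof: the whole argument reduces to spotting the correct algebraic factorization of $|g_1-g_2|$ and applying Cauchy--Schwarz in the appropriate direction for each of the two Hellinger bounds.
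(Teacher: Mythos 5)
Your proof is correct. The paper does not prove this Fact at all --- it is quoted from Le Cam's 1969 lecture notes --- so there is no internal argument to compare against; your three steps (the explicit Hahn-type split on $\{g_1\ge g_2\}$ for the identity $\|P_1-P_2\|_{TV}=\tfrac12 L_1$, the factorization $|g_1-g_2|=|\sqrt{g_1}-\sqrt{g_2}|(\sqrt{g_1}+\sqrt{g_2})$ with Cauchy--Schwarz for the upper bound, and the pointwise inequality $(\sqrt{g_1}-\sqrt{g_2})^2\le|g_1-g_2|$ for the lower bound) are exactly the standard route. One cosmetic remark: the estimate $\int\sqrt{g_1g_2}\,d\xi\le 1$ is Cauchy--Schwarz applied to the pair $\sqrt{g_1},\sqrt{g_2}$ (giving $\le(\int g_1\,d\xi)^{1/2}(\int g_2\,d\xi)^{1/2}=1$), not ``against the constant function $1$''; the bound itself is of course valid.
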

\begin{fact}\label{hp}[see \cite{strasser}, Lemma 2.19]
 Let $P$ and $Q$ be two product measures defined on the same sample space: $P=\otimes_{i=1}^n P_i$, $Q=\otimes_{i=1}^n Q_i$. Then
 \begin{equation}
  H ^2(P,Q)\leq \sum_{i=1}^nH^2(P_i,Q_i).
 \end{equation}
 Using \eqref{h}, it follows that
 \begin{equation*}\label{tvprodotto}
  \|P-Q\|_{TV}\leq\sqrt{\sum_{i=1}^n2\|P_i-Q_i\|_{TV}}.
 \end{equation*}
\end{fact}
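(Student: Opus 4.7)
The plan is to linearize the Hellinger distance by means of the Hellinger affinity $\rho(P,Q) := \int \sqrt{g_P\, g_Q}\, d\xi$, which satisfies the identity $H^2(P,Q) = 2 - 2\rho(P,Q)$ and which, crucially, factorizes over product measures. Indeed, writing the densities of $P = \otimes_{i=1}^n P_i$ and $Q = \otimes_{i=1}^n Q_i$ with respect to the product dominating measure $\otimes_{i=1}^n \xi_i$ as $\prod_i g_{P_i}(x_i)$ and $\prod_i g_{Q_i}(x_i)$, Tonelli's theorem yields
$$\rho(P,Q)=\prod_{i=1}^n\rho(P_i,Q_i).$$
Each affinity $\rho_i := \rho(P_i,Q_i)$ lies in $[0,1]$ by Cauchy--Schwarz, so setting $a_i := H^2(P_i,Q_i)/2 = 1 - \rho_i \in [0,1]$, the target inequality $H^2(P,Q) \leq \sum_i H^2(P_i,Q_i)$ reduces to the scalar estimate $1 - \prod_{i=1}^n (1-a_i) \leq \sum_{i=1}^n a_i$ for $a_i \in [0,1]$.

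I would prove this scalar inequality by induction on $n$: the case $n=1$ is tautological, while the step follows from $(1-a_{n+1})\prod_{i\leq n}(1-a_i) \geq \prod_{i\leq n}(1-a_i) - a_{n+1}$ combined with the induction hypothesis. Multiplying through by $2$ yields exactly the claimed subadditivity $H^2(P,Q) \leq \sum_{i=1}^n H^2(P_i,Q_i)$.

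For the total-variation consequence I would simply chain the two halves of \eqref{h} with the Hellinger subadditivity just established: the upper half applied to $P$ and $Q$ gives $\|P-Q\|_{TV} \leq H(P,Q)$, while the lower half applied termwise gives $H^2(P_i,Q_i) \leq 2\|P_i-Q_i\|_{TV}$. Taking square roots then produces
$$\|P-Q\|_{TV} \leq H(P,Q) \leq \sqrt{\sum_{i=1}^n H^2(P_i,Q_i)} \leq \sqrt{\sum_{i=1}^n 2\|P_i-Q_i\|_{TV}},$$
which is the announced bound. There is no serious obstacle in this argument; the only mildly delicate points are to check that $\rho(P,Q)$ is independent of the chosen dominating measure (which allows us to reduce to $\xi = P+Q$ if needed), and to verify that the factorization of $\rho$ remains meaningful when some marginals are mutually singular, in which case the corresponding factor vanishes and both sides of the scalar inequality still make sense.
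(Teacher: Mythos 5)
Your proof is correct: the identity $H^2(P,Q)=2-2\rho(P,Q)$, the multiplicativity of the Hellinger affinity over product measures, and the elementary inequality $1-\prod_i(1-a_i)\leq\sum_i a_i$ together give the subadditivity, and chaining the two halves of \eqref{h} gives the total-variation bound. The paper itself offers no proof of this fact, deferring instead to the cited Lemma 2.19 of Strasser, whose argument is essentially the one you give, so there is nothing to add.
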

Below, we collect some well-known facts that can be used to establish asymptotic equivalences. For the convenience of the reader, their proofs can be found in the Appendix.
\begin{fact}\label{fact:gaussiane}
 Let $Q_1\sim\No(\mu_1,\sigma_1^2)$ and $Q_2\sim\No(\mu_2,\sigma_2^2)$. Then
 $$\|Q_1-Q_2\|_{TV}\leq \sqrt{\bigg(1-\frac{\sigma_1}{\sigma_2}\bigg)^2+\frac{(\mu_1-\mu_2)^2}{2\sigma_2^2}}\leq \sqrt{\bigg(1-\frac{\sigma_1^2}{\sigma_2^2}\bigg)^2+\frac{(\mu_1-\mu_2)^2}{2\sigma_2^2}}.$$
\end{fact}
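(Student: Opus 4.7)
The plan is to control the total variation via the Bhattacharyya/Hellinger affinity $\rho := \int\sqrt{q_1 q_2}\,dx$, using the sharpened bound $\|Q_1-Q_2\|_{TV}\leq \sqrt{1-\rho^2}$. I would establish this in one line by writing $|q_1-q_2|=|\sqrt{q_1}-\sqrt{q_2}|\,(\sqrt{q_1}+\sqrt{q_2})$, applying the Cauchy--Schwarz inequality, and using $\int(\sqrt{q_1}+\sqrt{q_2})^2\,dx=2+2\rho$ together with $H^2(Q_1,Q_2)=2(1-\rho)$. This bound is strictly tighter than $\|Q_1-Q_2\|_{TV}\leq H(Q_1,Q_2)$ from \eqref{h}, and it is the form that matches the target right-hand side.

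Next, I would compute $\rho$ for the two Gaussians by completing the square in the exponent $-\tfrac14(x-\mu_1)^2/\sigma_1^2-\tfrac14(x-\mu_2)^2/\sigma_2^2$. A standard Gaussian integral yields
$$\rho^2 \;=\; \frac{2\sigma_1\sigma_2}{\sigma_1^2+\sigma_2^2}\,\exp\!\left(-\frac{(\mu_1-\mu_2)^2}{2(\sigma_1^2+\sigma_2^2)}\right)\;=:\;A\cdot B,$$
with $A\in(0,1]$ by AM--GM and $B\in(0,1]$ trivially.

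The core of the argument is the elementary splitting $1-AB = (1-A)+A(1-B)\leq (1-A)+(1-B)$, combined with
$$1-A \;=\; \frac{(\sigma_1-\sigma_2)^2}{\sigma_1^2+\sigma_2^2}\;\leq\;\frac{(\sigma_1-\sigma_2)^2}{\sigma_2^2}\;=\;\Big(1-\tfrac{\sigma_1}{\sigma_2}\Big)^2,\qquad 1-B\;\leq\;\frac{(\mu_1-\mu_2)^2}{2(\sigma_1^2+\sigma_2^2)}\;\leq\;\frac{(\mu_1-\mu_2)^2}{2\sigma_2^2},$$
where the first uses $\sigma_1^2+\sigma_2^2\geq\sigma_2^2$ and the second uses $1-e^{-x}\leq x$ for $x\geq 0$. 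Summing these and taking square roots gives the first claimed inequality. The second inequality is then purely algebraic: since $\sigma_1,\sigma_2>0$, one has $(1+\sigma_1/\sigma_2)^2\geq 1$, and hence $(1-\sigma_1/\sigma_2)^2\leq (1-\sigma_1/\sigma_2)^2(1+\sigma_1/\sigma_2)^2=(1-\sigma_1^2/\sigma_2^2)^2$.

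No serious obstacle is anticipated; the one subtlety worth flagging is that the plain inequality $\|\cdot\|_{TV}\leq H$ from \eqref{h} is not sharp enough (it would leave a factor $2$ in front of $1-A$, which cannot be absorbed into $(1-\sigma_1/\sigma_2)^2$ when $\sigma_1/\sigma_2$ is far from $1$). This is precisely why one must pass through $1-\rho^2$ rather than $H^2$.
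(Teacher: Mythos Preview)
Your argument is correct. It differs from the paper's route: the paper bounds total variation by the square root of the Kullback--Leibler divergence (Pinsker), computes $D(Q_1\|Q_2)=\ln(\sigma_2/\sigma_1)+\tfrac12(\sigma_1^2/\sigma_2^2-1)+(\mu_1-\mu_2)^2/(2\sigma_2^2)$, and then invokes the scalar inequality $-\ln r+\tfrac12(r^2-1)\le(r-1)^2$ with $r=\sigma_1/\sigma_2$. That last inequality, however, only holds for $r\ge1$, which is why the paper begins with ``by symmetry we may assume $\sigma_1\ge\sigma_2$'' --- a step that is slightly awkward since the stated bound is not symmetric in $\sigma_1,\sigma_2$. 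Your Hellinger-affinity approach sidesteps this entirely: the identities $1-A=(\sigma_1-\sigma_2)^2/(\sigma_1^2+\sigma_2^2)$ and $1-B\le(\mu_1-\mu_2)^2/(2(\sigma_1^2+\sigma_2^2))$ hold for all $\sigma_1,\sigma_2>0$, so no case distinction is needed. Your remark that the weaker bound $\|\cdot\|_{TV}\le H$ from \eqref{h} would not suffice is also on point and worth keeping, since it explains why the sharpened form $\sqrt{1-\rho^2}$ is essential here.
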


\begin{fact}\label{fact:processigaussiani}
 Let $m_i(\cdot)$ and $\sigma(\cdot)$ be real functions such that $\int_{\R}\frac{m_i(s)^2}{\sigma(s)^2}ds<\infty$, $i=1,2$, with $\sigma(\cdot)>0$. Then, with the same notation as in Section \ref{ap}:
 $$L_1\big(P_t^{(m_1,\sigma^2,0)},P_t^{(m_2,\sigma^2,0)}\big)=2\Big(1-2\phi\Big(-\frac{D_t}{2}\Big)\Big), \quad \forall t> 0,$$
 where $\phi$ denotes the cumulative distribution function of a Gaussian random variable $\No(0,1)$ and 
 $$D_t^2= \int_0^t\frac{(m_1(s)-m_2(s))^2}{\sigma^2(s)}ds.$$
 In particular, $L_1\big(P_t^{(m_1,\sigma^2,0)},P_t^{(m_2,\sigma^2,0)}\big)=O(D_t)$.
 \end{fact}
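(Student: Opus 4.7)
The plan is to apply Girsanov's theorem to obtain the Radon-Nikodym derivative $dP_t^{(m_1,\sigma^2,0)}/dP_t^{(m_2,\sigma^2,0)}$ in closed exponential form; after that, the $L_1$ distance reduces to a single one-dimensional Gaussian integral that can be evaluated explicitly via completing the square.

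First, I would write the canonical process under $P_t^{(m_2,\sigma^2,0)}$ as $dx_s = m_2(s)ds + \sigma(s)dW_s^{(2)}$ for some $P_t^{(m_2,\sigma^2,0)}$-Brownian motion $W^{(2)}$, which is exactly the representation \eqref{eq:Xcont}. Since $(m_1-m_2)/\sigma$ is deterministic with $\int_0^t[(m_1-m_2)(s)/\sigma(s)]^2 ds < \infty$ by hypothesis, Novikov's condition holds trivially, and Girsanov's theorem yields
\begin{equation*}
 \frac{dP_t^{(m_1,\sigma^2,0)}}{dP_t^{(m_2,\sigma^2,0)}} = \exp\bigg(\int_0^t \frac{m_1(s)-m_2(s)}{\sigma(s)}dW_s^{(2)} - \tfrac{1}{2}D_t^2\bigg) =: e^{Z - D_t^2/2},
\end{equation*}
where under $P_t^{(m_2,\sigma^2,0)}$ the Wiener integral $Z$ is Gaussian with mean $0$ and variance $D_t^2$ (its integrand being deterministic).

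Next, by the definition of $L_1$ given in Fact above, $L_1\big(P_t^{(m_1,\sigma^2,0)},P_t^{(m_2,\sigma^2,0)}\big) = \E\big[\big|e^{Z-D_t^2/2}-1\big|\big]$. Setting $Z = D_t\xi$ with $\xi \sim \No(0,1)$, the integrand changes sign precisely at $\xi = D_t/2$, so I would split
$$\E\big[\big|e^{D_t\xi-D_t^2/2}-1\big|\big] = \int_{D_t/2}^{\infty}\big(e^{D_t\xi-D_t^2/2}-1\big)\tfrac{1}{\sqrt{2\pi}}e^{-\xi^2/2}d\xi + \int_{-\infty}^{D_t/2}\big(1-e^{D_t\xi-D_t^2/2}\big)\tfrac{1}{\sqrt{2\pi}}e^{-\xi^2/2}d\xi,$$
and then apply the completing-the-square identity $e^{D_t\xi-D_t^2/2}\tfrac{1}{\sqrt{2\pi}}e^{-\xi^2/2}=\tfrac{1}{\sqrt{2\pi}}e^{-(\xi-D_t)^2/2}$. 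Each half collapses into a difference of Gaussian tails, and after using the symmetry $\phi(x) = 1 - \phi(-x)$ both halves equal $1 - 2\phi(-D_t/2)$, producing the announced closed form $2(1-2\phi(-D_t/2))$.

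Finally, the asymptotic $L_1 = O(D_t)$ is immediate from the closed form: the map $x \mapsto 1 - 2\phi(-x/2)$ vanishes at $0$ and has derivative $\tfrac{1}{\sqrt{2\pi}}e^{-x^2/8} \leq \tfrac{1}{\sqrt{2\pi}}$, hence $1 - 2\phi(-D_t/2) \leq D_t/\sqrt{2\pi}$ for every $D_t \geq 0$. I do not foresee any serious obstacle: the argument is essentially a one-line Girsanov computation followed by a standard Gaussian integral. The only point deserving a brief verification is the applicability of Girsanov, but because the exponent involves a deterministic integrand the exponential is automatically a true martingale and no localization is needed.
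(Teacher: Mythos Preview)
Your proposal is correct and follows essentially the same route as the paper: both apply Girsanov to reduce $L_1$ to $\E\big[|e^{Z-D_t^2/2}-1|\big]$ with $Z\sim\No(0,D_t^2)$, and both evaluate this by splitting at the sign change and using the Gaussian shift identity. The only cosmetic difference is that the paper packages the final evaluation as a separate lemma (identifying the expectation with the $L_1$ distance between two one-dimensional Gaussian densities $\No(\mu_1,\sigma^2)$ and $\No(\mu_2,\sigma^2)$), whereas you carry out the same completing-the-square computation inline.
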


\begin{property}\label{fatto3}
 Let $\mo_i=(\X_i,\A_i,\{P_{i,\theta}, \theta\in\Theta\})$, $i=1,2$, be two statistical models. 
Let $S:\X_1\to\X_2$ be a sufficient statistics
such that the distribution of $S$ under $P_{1,\theta}$ is equal to $P_{2,\theta}$. Then $\Delta(\mo_1,\mo_2)=0$. 
\end{property}

\subsection{The parameter space}\label{sec:parameterspace}
We now state the different kinds of assumptions on the non-parametric classes $\F$, $\Lambda$ and $\Gi$ that will show up in the statements of the theorems:
\begin{enumerate}[(F1)]
 \item Every $f\in\F$ is continuous and $\sup_{t\in \R}\{|f(t)|: f\in\F\}\leq B$, for some constant $B$.
\item Defining:
\begin{equation}\label{fn}
\bar{f}_n(t)=\left\{
\begin{array}{ll}
f(t_i)&\textnormal{if} \quad t_{i-1}\leq t<t_i,\quad i=1,\dots,n;\\
f(T_n) & \textnormal{if}\quad t=T_n;
\end{array}\right.
\end{equation}
we have
\begin{equation}\label{ip4}
\lim_{n\to\infty}\sup_{f\in\mathcal{F}}\int_{0}^{T_n}\frac{(f(t)-\bar{f}_n(t))^2}{\sigma_n^2(t)}dt=0.
\end{equation}
\end{enumerate}
\begin{enumerate}[(L1)]
 \item Denoting by $\|\cdot\|_{1}$ the $L_{1}$ norm on $\R$, we require $\sup_{\lambda\in\Lambda}\|\lambda\|_{1}\leq L_1$, for some constant $L_1$.
 \item Denoting by $\|\cdot\|_2$ the $L_2$ norm on $\R$, we ask $\sup_{\lambda\in\Lambda}\|\lambda\|_2^2\leq L_2$, for some constant $L_2$.
\end{enumerate}
\begin{enumerate}[(G1)]
 \item $\Gi$ is a subset of discrete distributions concentrated on $\Z$.
 \item $\Gi$ is a subset of absolutely continuous distributions with respect to Lebesgue, $h = \frac{d G}{d \text{Leb}}$. We ask that there are uniform constants $N_1, N_2 > 0$ such that $h \leq N_2$ Leb-a.e. on $[-\frac{1}{N_1}, \frac{1}{N_1}]$.
\end{enumerate}

\subsection{Main results and examples}\label{sec:mainresults}
Recall that models \eqref{X} and \eqref{eq:wn} depend on diffusion coefficients $\sigma_n(\cdot) = \varepsilon_n \sigma(\cdot)$, where $\varepsilon_n$ is either 1 (if $T_n \to \infty$) or $\varepsilon_n \to 0$ (if $T_n = T$ finite). We will assume that $\sigma(\cdot)$ is absolutely continuous, strictly positive, and its logarithmic derivative is uniformly bounded: There exists a constant $C_1$ such that:
 \begin{equation}\label{eq:lnsigma}
  \Big|\frac{d}{dt}\ln\sigma(t)\Big|\leq C_1, \quad t\in\R.
 \end{equation}
Our main results are then:

\begin{theorem}\label{teo1}
 Suppose that the parameter space $\F$ fulfills the assumptions (F1) and (F2) and let $\sigma(\cdot)$ satisfy \eqref{eq:lnsigma} as above. If, in addition, $\Lambda$ and $\Gi$ satisfy Assumptions (L2) and (G1), respectively, then, for $n$ big enough, we have
\begin{align*}
 &\Delta\big(\mo_n,\mathscr{Q}_n\big)=\Delta\big(\mathscr{Q}_n,\Wh_n\big)\leq O\bigg(\sup_{f\in \F}\int_{0}^{T_n}\frac{(f(t)-\bar{f}_n(t))^2}{\sigma_n^2(t)}dt+T_n\Delta_n+\sqrt{\Delta_n}\bigg).\label{teo1,eq1}
   \end{align*}
 Here, the $O$ depends only on the constants $C_1$ and $L_2$.
\end{theorem}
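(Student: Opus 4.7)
I would first observe that $\Delta(\mo_n,\Wh_n)=0$, which, by the triangle inequality $|\Delta(\mo_n,\mathscr{Q}_n)-\Delta(\mathscr{Q}_n,\Wh_n)|\le\Delta(\mo_n,\Wh_n)$, makes the two distances in the theorem equal and reduces the task to bounding $\Delta(\mathscr{Q}_n,\Wh_n)$. The equivalence $\mo_n\leftrightarrow\Wh_n$ is exact: the continuous part $x^c_t = x_t-\sum_{r\le t}\Delta x_r$ of a path $x\in D$ is a measurable function whose pushforward under $P_{T_n}^{(f,\sigma_n^2,\lambda G)}$ is $P_{T_n}^{(f,\sigma_n^2,0)}=\Wh_n$, as recalled in Section \ref{ap}. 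Moreover, conditionally on $x^c$, the jump part $x^d=x-x^c$ is an independent compound Poisson process with the (fixed) intensity $\lambda$ and jump law $G$, and is $f$-free. Hence $x^c$ is a sufficient statistic and Property \ref{fatto3} yields $\delta(\mo_n,\Wh_n)=0$; the reverse transition simply simulates the jumps on top of a sample from $\Wh_n$.

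It remains to bound $\Delta(\mathscr{Q}_n,\Wh_n)$ by following the three-step chain signposted in the introduction,
\[
\mathscr{Q}_n \;\stackrel{(a)}{\longleftrightarrow}\; \mathscr{Q}_n^{B} \;\stackrel{(b)}{\longleftrightarrow}\; \mathscr{Q}_n^{G} \;\stackrel{(c)}{\longleftrightarrow}\; \Wh_n.
\]
\emph{(a) Bernoulli approximation.} In $\mathscr{Q}_n^{B}$ the Poisson count $P_i$ on the $i$-th subinterval is replaced by a Bernoulli variable of parameter $\lambda_i=\int_{t_{i-1}}^{t_i}\lambda(s)\,ds$. The per-interval squared Hellinger is $O(\lambda_i^2)$, so Fact \ref{hp}, Cauchy--Schwarz, and (L2) give $\sum_i\lambda_i^2\le\Delta_n\|\lambda\|_{L_2}^2\le L_2\Delta_n$, producing an $O(\sqrt{\Delta_n})$ cost. \emph{(b) Filtering the integer jumps.} Under (G1) each Bernoulli-truncated increment reads $\xi_i=G_i+B_iY_1$ with $G_i\sim\No(m_i,s_i^2)$ independent of the $\Z$-valued $B_iY_1$; I would construct explicit kernels $K_i$ that sample the hidden integer jump from its $\xi_i$-posterior and subtract it, producing an output whose law is close in TV to $\No(m_i,s_i^2)$. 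Since the Gaussian scale $s_i=O(\varepsilon_n\sqrt{\Delta_n^i})$ is small relative to the unit lattice spacing, the separation error aggregates to a term absorbed into $T_n\Delta_n$. \emph{(c) Brown--Low step.} Once $\mathscr{Q}_n^{G}$ is a product of $\No(m_i,s_i^2)$ with $m_i=\int_{t_{i-1}}^{t_i}f$, $s_i^2=\int_{t_{i-1}}^{t_i}\sigma_n^2$, I mimic \cite{BL} to couple it with the finite-dimensional marginals of \eqref{eq:wn}; Fact \ref{fact:gaussiane} together with \eqref{eq:lnsigma} handles the variance-mismatch contribution (also of $T_n\Delta_n$-type, since $|1-s_i^2/(\sigma_n^2(t_i)\Delta_n^i)|=O(\Delta_n^i)$), while the drift-mismatch aggregates, by Fact \ref{fact:processigaussiani}, to $\sup_{f\in\F}\int_0^{T_n}(f-\bar f_n)^2/\sigma_n^2$.

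The most delicate step is (b). The kernel must be explicit, $f$-free, and uniform over $\lambda\in\Lambda$ and $G\in\Gi$, with only $\sigma_n^2$ available to it. Assumption (G1), which confines jumps to the integer lattice, is the crucial structural input: conditioning on $\xi_i\bmod\Z$ essentially singles out the continuous Gaussian piece, and the residual overlap between integer-shifted Gaussian lumps is controlled by the concentration of $G_i$. This is also what produces the sharper $\sqrt{\Delta_n}$ rate under (G1) compared with the $\sqrt[4]{\Delta_n}$ rate obtained under (G2) in Theorem \ref{teo}. The rest is essentially bookkeeping: Fact \ref{hp} aggregates per-interval Hellinger contributions on the product structure, Fact \ref{fact:gaussiane} handles Gaussian mean/variance mismatches, and Fact \ref{fact:processigaussiani} controls the drift perturbation. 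Summing the contributions from (a), (b), (c) yields the claimed bound $O\bigl(\sup_{f\in\F}\int_0^{T_n}(f-\bar f_n)^2/\sigma_n^2+T_n\Delta_n+\sqrt{\Delta_n}\bigr)$.
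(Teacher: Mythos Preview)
Your approach is the paper's: Proposition \ref{prop:stage} for $\Delta(\mo_n,\Wh_n)=0$, then Lemma \ref{bernoulli} for step (a), Lemma \ref{discrete} for step (b), and Proposition \ref{prop:gaussiane} for step (c).

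One point in step (b) needs correction. You propose to ``sample the hidden integer jump from its $\xi_i$-posterior and subtract it'', but that posterior depends on $m_i=\int_{t_{i-1}}^{t_i} f$ (through the Gaussian likelihood), on $\alpha_i$ (hence on $\lambda$), and on $h$ (hence on $G$); such a kernel is not the $f$-free, $(\lambda,G)$-uniform object you yourself require two sentences later. The paper's actual kernel is simply $K(x,A)=\I_A(x-[x])$ --- subtract the nearest integer --- which is precisely your ``$\xi_i\bmod\Z$'' remark made concrete, and is entirely parameter-free. Its error (Lemma \ref{discrete}) is in fact exponentially small in $1/\sigma_i$ and therefore swallowed by the $\sqrt{\Delta_n}$ coming from step (a), not by $T_n\Delta_n$ as you write.

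A minor remark on step (c): your $\mathscr{Q}_n^{G}$, with increments $\No\big(\int_{t_{i-1}}^{t_i}f,\int_{t_{i-1}}^{t_i}\sigma_n^2\big)$, already \emph{is} the increment model $\tilde\Qi_n$ of \eqref{eq:wn}, so no coupling is needed there. The variance and drift mismatches you describe actually occur inside the proof of Proposition \ref{prop:gaussiane}, between $\tilde\Qi_n$ and an auxiliary model $\tilde\Si_n$ obtained from $\Wh_n$ by first replacing $f$ with $\bar f_n$ (cost: the integral term, via Fact \ref{fact:processigaussiani}) and then passing to a sufficient statistic.
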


\begin{theorem}\label{teo2}
 Suppose that the parameter space $\F$ fulfills the assumptions (F1) and (F2) and let $\sigma(\cdot)$ be as above. Suppose also there exist $m_\sigma$, $M_\sigma$ such that $0 < m_\sigma \leq \sigma(\cdot) \leq M_\sigma < \infty$. Let $\beta_i=B(t_i-t_{i-1})+\sqrt{\sigma_i}$. Moreover suppose that $\Lambda$ fulfills Assumptions (L1), (L2) and $\Gi$ fulfills Assumption (G2). Then, for $n$ big enough, we have
\begin{align*}
\Delta& (\mo_n,\mathscr{Q}_n)= \Delta(\Wh_n,\mathscr{Q}_n) \leq O\bigg(\sup_{f\in \F}\int_{0}^{T_n}\frac{(f(t)-\bar{f}_n(t))^2}{\sigma_n^2(t)}dt+T_n\Delta_n+\Delta_n^{\frac{1}{4}}\bigg).
\end{align*}
Here the leading terms in the $O$ depend on $L_1$, $N_2$ and $M_\sigma$ only.
\end{theorem}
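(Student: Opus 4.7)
I follow the roadmap in the introduction, interpolating between $\mathscr{Q}_n$ and $\Wh_n$ by a chain of experiments and applying the triangle inequality for the Le Cam distance. By independence of increments, the law of $(X_{t_i})_{i=0}^n$ factorises: the $i$-th increment has law $\mu_i=\phi_i\ast\Pi_{\lambda_i,G}$, where $\phi_i=\No(F_i,\Sigma_i)$ with $F_i=\int_{t_{i-1}}^{t_i}f$ and $\Sigma_i=\int_{t_{i-1}}^{t_i}\sigma_n^2$, and $\Pi_{\lambda_i,G}$ is the compound Poisson law of intensity $\lambda_i=\int_{t_{i-1}}^{t_i}\lambda$ and jump law $G$. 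The chain I build is $\mathscr{Q}_n\rightsquigarrow\tQi_n^B\rightsquigarrow\tQi_n^G\rightsquigarrow\Wh_n$, where $\tQi_n^B$ is the Bernoulli-approximated model, $\tQi_n^G$ is the pure Gaussian-increment product model $\bigotimes_i\phi_i$, and the last link is handled by a Brown--Low-type argument. The identity $\Delta(\mo_n,\mathscr{Q}_n)=\Delta(\Wh_n,\mathscr{Q}_n)$ stated in the theorem will come from running the same chain starting from $\mo_n$, whose only extra ingredient is the reconstruction of the continuous path from $(X_{t_i})$ via Brownian bridges for the diffusion part and an independent simulation of jump times and sizes.

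\textbf{Bernoulli approximation.} I replace each $\Pi_{\lambda_i,G}$ by the Bernoulli surrogate $(1-\lambda_i)\delta_0+\lambda_i G$. Expanding the Poisson probabilities in $\lambda_i$ gives a per-interval total variation error of order $\lambda_i^2$; Cauchy--Schwarz combined with (L2) gives $\sum_i\lambda_i^2\leq\Delta_n\|\lambda\|_2^2\leq L_2\Delta_n$, and Fact~\ref{hp} (or the subadditivity of TV on product measures) yields $\Delta(\mathscr{Q}_n,\tQi_n^B)=O(\Delta_n)$, absorbed into the $T_n\Delta_n$ term of the final rate.

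\textbf{Filtering by an $f$-independent Markov kernel.} This is the heart of the argument: bounding $\Delta(\tQi_n^B,\tQi_n^G)$. Each per-interval law is $\mu_i^B=(1-\lambda_i)\phi_i+\lambda_i(\phi_i\ast g)$, and I need a Markov kernel $K_i$ pushing $\mu_i^B$ to (approximately) $\phi_i$. Since the mean $F_i$ of $\phi_i$ depends on the parameter $f$, the kernel must be shift-invariant, which rules out the naive ``classify jump/no-jump from the observed magnitude'' construction. Under (G1) the jumps lie on $\Z$, well separated from diffusion fluctuations of scale $\sqrt{\Sigma_i}\lesssim\sqrt{\Delta_n}$, so a rounding-type shift-invariant kernel resolves them cleanly, giving the $\sqrt{\Delta_n}$ rate of Theorem~\ref{teo1}. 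Under (G2) one has only the near-zero density bound $h\leq N_2$, so a small jump is essentially indistinguishable from a Gaussian fluctuation of the same order. I expect $K_i$ to be a randomised translation by an independent copy of $Y\sim G$, with weights calibrated so that the $(\phi_i\ast g)$ branch is cancelled to leading order; bounding the resulting per-interval Hellinger error by a quantity of order $\lambda_i\sqrt{\Sigma_i}\,N_2$ (using $\sigma\geq m_\sigma$, $\sigma\leq M_\sigma$, and Facts~\ref{fact:gaussiane}--\ref{fact:processigaussiani}) and summing via Fact~\ref{hp} produces the $O(\Delta_n^{1/4})$ rate appearing in the statement.

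\textbf{Brown--Low conclusion and the $\mo_n\leftrightarrow\mathscr{Q}_n$ comparison.} The final link $\Delta(\tQi_n^G,\Wh_n)$ is handled by a Brown--Low-type argument (Section~\ref{sec:bl}): the discrete Gaussian increments $\bigotimes_i\No(F_i,\Sigma_i)$ are sufficient for the piecewise-constant drift $\bar f_n$, and the cost of replacing $f$ by $\bar f_n$ in continuous white noise is precisely $\sup_f\int_0^{T_n}(f-\bar f_n)^2/\sigma_n^2$ by Fact~\ref{fact:processigaussiani}, matching (F2). For $\Delta(\mo_n,\mathscr{Q}_n)$ one direction (restrict the path to the grid) is trivial by Property~\ref{fatto3}; the other uses a Markov kernel that interpolates the diffusion between observation times with an independent Brownian bridge of the correct variance and re-injects an independent compound Poisson of intensity $\lambda$ and law $G$, combining seamlessly with the Bernoulli and filtering estimates above. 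The main obstacle throughout is the shift-invariance constraint on the filtering kernel in case (G2) --- it is both the delicate construction and the source of the weaker $\Delta_n^{1/4}$ rate compared to Theorem~\ref{teo1}.
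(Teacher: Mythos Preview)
Your overall chain and the Bernoulli and Brown--Low steps match the paper, but the filtering step --- the heart of the argument, as you say --- is where your proposal diverges and has a genuine gap.

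First, your claim that the kernel ``must be shift-invariant'' is too strong: it only has to be independent of the unknown $f$, and (F1) gives the uniform bound $|m_i|\le L:=B(t_i-t_{i-1})$, which may be used freely. The paper's kernel (Lemma~\ref{continue}) is \emph{not} shift-invariant: with $B_i=[-(L+\sigma_i^{1-\varepsilon}),\,L+\sigma_i^{1-\varepsilon}]$, it keeps $x$ if $x\in B_i$ and otherwise replaces $x$ by a fresh draw from $\No(0,\sigma_i^2)$. This is a threshold-and-resample construction, not a translation, and it depends on the known $L$ and $\sigma_i$ but not on $m_i$. Your proposed ``randomised translation by an independent copy of $Y\sim G$, with weights calibrated'' does not yield a Markov kernel that sends $(1-\lambda_i)\phi_i+\lambda_i(\phi_i\ast G)$ close to $\phi_i$: convolving by $G^{-}$ would replace $\phi_i$ by $\phi_i\ast G^{-}$ on the dominant no-jump branch, which is far from $\phi_i$; and there is no positive ``calibration'' that cancels the $\phi_i\ast G$ branch without simultaneously corrupting the $\phi_i$ branch. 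The per-interval error analysis you sketch (order $\lambda_i\sqrt{\Sigma_i}\,N_2$) is exactly what the thresholding kernel produces --- via the three contributions $\phi(-\sigma_i^{-\varepsilon})$, $\alpha_i\int_{-2\beta_i}^{2\beta_i}h$, and $\alpha_i|m_i|/(\sqrt{2}\sigma_i)$ --- once one takes $\varepsilon=\tfrac12$; but it does not follow from the translation idea.

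Second, the equality $\Delta(\mo_n,\mathscr{Q}_n)=\Delta(\Wh_n,\mathscr{Q}_n)$ is obtained in the paper much more directly than by Brownian-bridge interpolation: Proposition~\ref{prop:stage} shows $\Delta(\mo_n,\Wh_n)=0$ via Girsanov, since $\omega\mapsto\omega^c$ is a sufficient statistic for $f$. Your bridge-plus-reinjection scheme is not needed (and would in any case require the nuisance pair $(\lambda,G)$ to build the reverse kernel, which the paper also uses implicitly but avoids having to analyse).
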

As a corollary, when $\F$ consists of uniformly bounded $\alpha$-Hölder functions, one retrieves the rates of convergence stated in Theorem \ref{teo}. We now give some examples of situations where our results can be applied.
\begin{ex}The sum of a diffusion process and an inhomogeneous Poisson process:\label{ex:1}
 This corresponds to setting $Y_1\equiv1$, so that $\Gi$ consists of the only Dirac mass in $1$. Let $\sigma_n(\cdot) = \varepsilon_n \sigma(\cdot)$ satisfy \eqref{eq:lnsigma} as above, and $\Lambda$ satisfy Assumption (L2). If $\F$ is a class of $\alpha$-Hölder, uniformly bounded functions on $\R$ with $\alpha\in(0,1]$, for $n$ big enough,  an application of Theorem \ref{teo1} yields:
 $$\Delta(\mo_n,\mathscr{Q}_n)= \Delta(\mathscr{Q}_n,\Wh_n)=O\Big(\sqrt {\Delta_n}+T_n\Delta_n+T_n\Delta_n^{2\alpha}\varepsilon_n^{-2}\Big).$$
\end{ex}
\begin{ex}Merton model inhomogeneous in time:
 This corresponds to $\Gi$ being a parametric class of Gaussian random distributions $\No(m,\Gamma^2)$, $\Gamma>0$. Suppose that $\sigma(\cdot)$ is as in Example \ref{ex:1} and $\Lambda$ satisfies Assumptions (L1) and (L2). Let $\F$ be a class of $\alpha$-Hölder, uniformly bounded functions on $\R$ with $\alpha\in(0,1]$. Then, for $n$ big enough, an application of Theorem \ref{teo2} yields:
 $$\Delta(\mo_n,\mathscr{Q}_n)= \Delta(\mathscr{Q}_n,\Wh_n)=O\Big(\sqrt[4] {\Delta_n}+T_n\Delta_n+T_n\Delta_n^{2\alpha}\varepsilon_n^{-2}\Big).$$
\end{ex}

\subsection{Discussion}\label{sec:discussion}
\begin{remark}
 Hypotheses (F1), (F2) are modeled on those in \cite{BL}. They are satisfied, for example, by any class $\F$ of uniformly bounded $\alpha$-Hölder functions, with $\alpha$ depending on the asymptotics of the data $\Delta_n$, $T_n$, $\varepsilon_n$, as well as by uniformly bounded Sobolev $W^{\alpha, 2}$ functions. Hypothesis \eqref{eq:lnsigma} on $\sigma^2(\cdot)$ also appears in \cite{BL}. The non-parametric classes $\Lambda$ and $\Gi$ were introduced to stress that the precise parameters $\lambda$, $G$ chosen do not play any role in the proofs.
\end{remark}
 
\begin{remark}
 In the case where $\Gi$ satisfies Assumption (G1) (i.e. the $Y_i$'s are discrete), the Markov kernel $K$ in Lemma \ref{discrete} does not depend on $\sigma(\cdot)$. Hence, combining our Theorem \ref{teo1} with the one by Carter \cite{C2007} one can obtain the same equivalence result when $\sigma(\cdot)$ is an unknown nuisance parameter.
\end{remark}

\begin{remark}\label{rmk:rischio}
 An important advantage of showing the asymptotic equivalence between statistical models is that it allows to transfer statistical inference procedures from one model to the other. This is done in such a way that the asymptotic risk remains the same, at least for bounded loss functions. When the proof of such an equivalence is constructive, one can provide a precise recipe for producing, from a sequence of procedures in one problem, an asymptotically equivalent sequence in the other one. Formally, let us consider two sequences of statistical models $\mo_j^n=(\X_{j,n},\A_{j,n},\{P_{j,n,\theta}; \theta\in\Theta\})$ and a decision or action space $(A,\A)$. Furthermore, for every $n$, let us denote by $\rho_{j,n}$ a possibly \emph{randomized decision procedure} in $\mo_j^n$, i.e. a Markov kernel $\rho_{j,n}:(\X_{j,n},\A_{j,n})\mapsto(A,\A)$ and by $R(\mo_{j,n},\rho_{j,n},L_n,\theta)$ the \emph{risk} in the model $\mo_{j,n}$ with respect to the decision rule $\rho_{j,n}$ and the loss function $L_n$. One says that 
 the sequences of procedures $\rho_{1,n}$ and $\rho_{2,n}$ are \emph{asymptotically equivalent} if for any sequence of bounded loss function $L_n$ one has $\lim_{n\to\infty}\sup_{\theta\in\Theta}|R(\mo_{1,n},\rho_{1,n},L_n,\theta)-R(\mo_{2,n},\rho_{2,n},L_n,\theta)| =0$. 
 
 In this paper there are essentially four statistical models that we prove to be mutually asymptotically equivalent: $\mo_n$, $\Wh_n$, $\mathscr Q_n$ and $\tilde \Qi_n$ (which is associated with the observation of the increments of $(y_t)$ as in \eqref{eq:wn}). The proofs of Theorems \ref{teo1} and \ref{teo2} allow us to use the knowledge of a sequence of procedures in $\mo_n$, $\Wh_n$ or $\tilde\Qi_n$ for producing one in  $\mathscr Q_n$.
 
 For example, suppose that $\Gi$ satisfies Assumption (G1) and let $(\delta_n)$ be a sequence of procedures in  $\tilde \Qi_n$. Define a sequence of procedures in  $\mathscr Q_n$ as:
 \begin{equation*}
  \gamma_n(z_0,\dots,z_n):=
     \delta_n\big(z_1 - z_0-[z_1-z_0],\dots,z_n-z_{n-1}-[z_n-z_{n-1}]\big),\quad z_0,\dots,z_n\in\R,
   \end{equation*}
   where $[z]$ denotes the the closest integer to $z$.
Then $(\gamma_n)$ is asymptotically equivalent to $(\delta_n)$. 

Remark that, up to this point, we did not use the knowledge of $\sigma^2(\cdot)$. In particular, if one disposes of a sequence of estimators of $f(\cdot)$ in $\tilde \Qi_n$ an equivalent one can be deduced in $\mathscr Q_n$ also when $\sigma^2(\cdot)$ is unknown. 
\end{remark}

\section{Proofs}\label{sec:proofs}
\subsection{ Bernoulli approximation}\label{sec:bernoulli}
\begin{lemma}\label{bernoulli}
 Let $(N_i)_{i=1}^n, (P_i)_{i=1}^n$, $(Y_i)_{i=1}^n$ and $(\varepsilon_i)_{i=1}^n$ be samples of, respectively, Gaussian random variables $\No(m_i,\sigma_i^2)$, Poisson random variables $\mo(\lambda_i)$, random variables with common distribution $G$ and Bernoulli random variables of parameters $\alpha_i:=\lambda_i e^{-\lambda_i}$. Let us denote by $Q_{N_i}$ (resp. $Q_{(Y_i,P_i)}$, $Q_{(Y_1,\varepsilon_i)}$) the law of $N_i$ (resp. $\sum_{j=1}^{P_i} Y_j$, $\varepsilon_i Y_1$). Then
 \begin{equation}\label{eq:lambda}
  \|\otimes_{i=1}^n Q_{N_i}*Q_{(Y_i,P_i)}-\otimes_{i=1}^n Q_{N_i}*Q_{(Y_1,\varepsilon_i)}\|_{TV}\leq 2\sqrt{\sum_{i=1}^n\lambda_i^2}
 \end{equation}
 where the symbol $*$ denotes the product convolution between measures.
\end{lemma}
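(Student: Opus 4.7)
My plan is to exploit the fact that the Gaussian ``noise'' $Q_{N_i}$ plays no essential role in the bound, and then to set up a direct coupling between the compound Poisson and the Bernoulli ``one-jump'' version.

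First, I would observe that for any fixed probability measure $\nu$, the map $P\mapsto P*\nu$ is a Markov kernel (it is the pushforward by $(x,y)\mapsto x+y$ of $P\otimes\nu$), hence total variation is non-increasing under convolution with a fixed measure. Consequently
$$\bigl\|Q_{N_i}*Q_{(Y_i,P_i)}-Q_{N_i}*Q_{(Y_1,\varepsilon_i)}\bigr\|_{TV}\leq \bigl\|Q_{(Y_i,P_i)}-Q_{(Y_1,\varepsilon_i)}\bigr\|_{TV},$$
so it suffices to bound the right-hand side uniformly in $i$ and then apply Fact \ref{hp} (tensorisation of the Hellinger distance) to recover the product structure.

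To estimate each marginal TV, I would construct an explicit coupling on a common probability space. Realise $P_i\sim\mo(\lambda_i)$ together with an i.i.d. sequence $(Y_j)_{j\geq 1}$ with law $G$, and define $\varepsilon_i:=\I_{\{P_i=1\}}$. Then $\p(\varepsilon_i=1)=\lambda_i e^{-\lambda_i}=\alpha_i$, so $\varepsilon_i$ is Bernoulli of the required parameter, and on the event $\{P_i\leq 1\}$ the two random variables coincide: indeed if $P_i=0$ both are $0$, while if $P_i=1$ both equal $Y_1$. Hence, by the coupling characterisation of total variation,
$$\bigl\|Q_{(Y_i,P_i)}-Q_{(Y_1,\varepsilon_i)}\bigr\|_{TV}\leq \p(P_i\geq 2)=1-(1+\lambda_i)e^{-\lambda_i}\leq \frac{\lambda_i^2}{2},$$
where the final inequality is a Taylor-expansion check of $e^{-x}(1+x)\geq 1-x^2/2$ for $x\geq 0$.

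It remains to combine these marginal bounds via Fact \ref{hp}:
$$\bigl\|\otimes_{i=1}^n Q_{N_i}*Q_{(Y_i,P_i)}-\otimes_{i=1}^n Q_{N_i}*Q_{(Y_1,\varepsilon_i)}\bigr\|_{TV}\leq \sqrt{\sum_{i=1}^n 2\cdot\frac{\lambda_i^2}{2}}=\sqrt{\sum_{i=1}^n\lambda_i^2},$$
which is stronger than the claimed bound by a factor of $2$. There is no serious obstacle here; the only point to handle carefully is the first reduction, i.e. justifying that convolution with a fixed probability measure is a Markov kernel so that TV is non-expanding under it. Everything else is either the explicit coupling (which is elementary) or a direct application of the Hellinger tensorisation already recalled in the excerpt.
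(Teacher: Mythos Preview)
Your argument is correct and in substance coincides with the paper's: both proofs rest on the observation that the compound Poisson law and the Bernoulli--$Y_1$ law agree on the event $\{P_i\in\{0,1\}\}$, so the marginal total variation is controlled by $\p(P_i\geq 2)$, after which Fact~\ref{hp} handles the tensorisation. The paper carries this out by a direct computation on the convolved measures (expanding the Poisson sum and noting that the $k=0,1$ terms cancel against the Bernoulli terms), whereas you first strip off $Q_{N_i}$ via the non-expansion of TV under convolution and then phrase the remaining step as an explicit coupling. Your route is a little cleaner and yields the sharper constant $\sqrt{\sum_i\lambda_i^2}$ rather than $2\sqrt{\sum_i\lambda_i^2}$; the paper's extra factor comes from a cruder bound on $\sum_{k\geq 2}e^{-\lambda_i}\lambda_i^k/k!$.
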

\begin{proof}
Observe that:
 \begin{align*}
  \|Q_{N_i}*Q_{(Y_i,P_i)}-Q_{N_i}*Q_{(Y_1,\varepsilon_i)}\|_{TV}&= \sup_{A\in\B(\R)}\bigg|\sum_{k\geq 0} \p\bigg(N_i+\sum_{j=1}^kY_j\in A\bigg)e^{-\lambda_i}\frac{\lambda_i^k}{k!}\\
  &\phantom{= \sup_{A\in\B(\R)}\bigg|}-(1-\alpha_i)\p(N_i\in A)-\alpha_i\p(N_i+Y_1\in A)\bigg|\\
  &=\sup_{A\in\B(\R)}\bigg|\sum_{k\geq 2} \bigg(\p\Big(N_i+\sum_{j=1}^kY_j\in A\Big)-\p(N_i\in A)\bigg)e^{-\lambda_i}\frac{\lambda_i^k}{k!}\bigg|\\
  &\leq2\sum_{k\geq 2}e^{-\lambda_i}\frac{\lambda_i^k}{k!}\leq 2\lambda_i^2.
 \end{align*}
 We get \eqref{eq:lambda} thanks to Fact \ref{hp}.
\end{proof}
\subsection{Explicit construction of Markov kernels}\label{sec:Markov}
\begin{lemma}\label{discrete}
 Let $(N_i)_{i=1}^n$ and $(\varepsilon_i)_{i=1}^n$ be samples of, respectively, Gaussian random variables $\No(m_i,\sigma_i^2)$ with $|m_i|\leq \frac{1}{3}$ and Bernoulli random variables of parameters $\alpha_i:=\lambda_i e^{-\lambda_i}$, $\lambda_i>0$. Moreover, let $Y_1$ be a discrete random variable taking values in $\Z$ and denote by $Q_{N_i}$ (resp. $Q_{(Y_1,\varepsilon_i)}$) the law of $N_i$ (resp. $\varepsilon_i Y_1$). 
 For all $x$ in $\R$ denote by $[x]$ the nearest integer to $x$ and define the Markov kernel
 $$
 K(x,A)=\I_A(x-[x]),\quad \forall A\in\B(\R).
 $$
 Then
 \begin{equation}\label{eq:bernoulli}
  \big\|\otimes_{i=1}^n K(Q_{N_i}*Q_{(Y_1,\varepsilon_i)})-\otimes_{i=1}^n Q_{N_i}\big\|_{TV}\leq \sqrt{2\sum_{i=1}^n\bigg(\frac{6}{\sigma_i}\varphi\Big(\frac{1}{6\sigma_i}\Big)+4\phi\Big(\frac{-1}{6\sigma_i}\Big)\bigg)}
 \end{equation}
 where $*$ stands for the convolution product, $\phi$ denotes the cumulative distribution of a Gaussian random variable $\No(0,1)$ and $\varphi$ the derivative of $\phi$.
 \end{lemma}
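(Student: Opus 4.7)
The first step is to exploit the fact that the kernel $K$ acts pathwise by the map $x\mapsto x-[x]$. Since $Y_1\in\Z$ and $\varepsilon_i\in\{0,1\}$, the shift $\varepsilon_iY_1$ is almost surely an integer, hence $[N_i+\varepsilon_iY_1]=[N_i]+\varepsilon_iY_1$ (outside the null set where $N_i$ falls on a half-integer), so that
\[
N_i+\varepsilon_iY_1-[N_i+\varepsilon_iY_1]\;=\;N_i-[N_i]\quad\text{a.s.}
\]
Consequently $K(Q_{N_i}*Q_{(Y_1,\varepsilon_i)})=K(Q_{N_i})$, the law of the ``folded'' Gaussian $N_i-[N_i]$. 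The problem reduces to estimating $\|\otimes_{i=1}^n K(Q_{N_i})-\otimes_{i=1}^n Q_{N_i}\|_{TV}$.

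Next, by Fact~\ref{hp} one has
\[
\big\|\otimes_{i=1}^n K(Q_{N_i})-\otimes_{i=1}^n Q_{N_i}\big\|_{TV}\;\leq\;\sqrt{2\sum_{i=1}^n \|K(Q_{N_i})-Q_{N_i}\|_{TV}},
\]
so it suffices to bound each componentwise TV by $\frac{6}{\sigma_i}\varphi(1/(6\sigma_i))+4\phi(-1/(6\sigma_i))$.

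For the third step, fix $i$ and write $p_i$ for the density of $N_i\sim\No(m_i,\sigma_i^2)$. The image measure $K(Q_{N_i})$ is supported on $[-\tfrac12,\tfrac12)$ with density $q_i(x)=\sum_{k\in\Z}p_i(x+k)$. Since $q_i\geq p_i$ on $[-\tfrac12,\tfrac12)$ and $q_i=0$ outside,
\[
2\|K(Q_{N_i})-Q_{N_i}\|_{TV}\;=\;\int_{|x|>1/2}p_i(x)\,dx\;+\;\int_{-1/2}^{1/2}\sum_{k\neq 0}p_i(x+k)\,dx.
\]
The first integral is the Gaussian tail $\p(|N_i|>1/2)$; because $|m_i|\leq 1/3$, we have $\min(1/2-m_i,\,1/2+m_i)\geq 1/6$, and this tail is at most $2\phi(-1/(6\sigma_i))$, producing the $4\phi$ term. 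For the second, I would bound each translate $p_i(\cdot+k)$ by its pointwise maximum on $[-1/2,1/2]$, which is attained at the point of $[k-1/2,k+1/2]$ closest to $m_i$; under $|m_i|\leq 1/3$ this distance is at least $\frac16+(|k|-1)$. Integrating over an interval of length $1$ gives
\[
\int_{-1/2}^{1/2}p_i(x+k)\,dx\;\leq\;\frac{1}{\sigma_i}\varphi\!\left(\tfrac{1/6+(|k|-1)}{\sigma_i}\right),
\]
and summing over $k\in\Z\setminus\{0\}$ yields a dominating $\frac{6}{\sigma_i}\varphi(1/(6\sigma_i))$ after absorbing the remaining $|k|\geq 2$ contributions into a geometric series with bounded ratio.

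The main obstacle will be this second piece: the naive identity $\int_{-1/2}^{1/2}\sum_{k\neq 0}p_i(x+k)\,dx=\p([N_i]\neq 0)$ already gives a bound of order $\phi(-1/(6\sigma_i))$, but obtaining the explicit $\frac{6}{\sigma_i}\varphi(1/(6\sigma_i))$ factor demands a pointwise density argument together with careful control of the series over $|k|\geq 2$ so that only the $k=\pm 1$ terms dominate. Once the per-coordinate estimate is in place, substituting back into the bound from Fact~\ref{hp} concludes the proof.
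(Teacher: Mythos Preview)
Your approach is correct and is genuinely simpler than the paper's. The key observation that $x\mapsto x-[x]$ is invariant under integer shifts, so that $K(Q_{N_i}*Q_{(Y_1,\varepsilon_i)})=K(Q_{N_i})$, is not used in the paper at all. There the authors work directly with the mixture density $G_i(x,k)=(1-\alpha_i)g_i(x)+\alpha_i g_i(x-k)$, split $\int \I_A(x-[x])G_i(x,k)\,dx$ over the lattice $\Z$, and control three separate pieces (a $k$-term, a double sum over $k,l$, and a Gaussian tail) via the mean value theorem. That is where the $\tfrac{6}{\sigma_i}\varphi(1/(6\sigma_i))$ contribution arises. Your route bypasses all of this.

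One clarification: the ``obstacle'' you anticipate is not real. Once you reduce to $K(Q_{N_i})$ versus $Q_{N_i}$, your own identity
\[
\int_{-1/2}^{1/2}\sum_{k\neq 0}p_i(x+k)\,dx \;=\; \sum_{k\neq 0}\int_{k-1/2}^{k+1/2}p_i(y)\,dy \;=\; \p(|N_i|>1/2)
\]
shows that the two pieces of $2\|K(Q_{N_i})-Q_{N_i}\|_{TV}$ are \emph{equal}, giving exactly $\|K(Q_{N_i})-Q_{N_i}\|_{TV}=\p(|N_i|>1/2)\le 2\phi(-1/(6\sigma_i))$. This already implies the per-coordinate bound in the lemma, since $2\phi(-1/(6\sigma_i))\le \tfrac{6}{\sigma_i}\varphi(1/(6\sigma_i))+4\phi(-1/(6\sigma_i))$ trivially. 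There is no need for the pointwise density argument or the series control you describe; your proof is complete (and sharper) without it.
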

 \begin{proof}
  Denote by $g_i(\cdot)$ the density of $N_i$, by $h(\cdot)$ the density of $Y_1$ with respect to the counting measure and define $G_i(x,k):=(1-\alpha_i)g_i(x)+\alpha_i g_i(x-k)$, $\forall x\in \R$, $\forall k\in \Z$.
  We have, for all $i$:
  \begin{align*}
   \|K(Q_{N_i}*Q_{(Y_1,\varepsilon_i)})- Q_{N_i}\big\|_{TV}&=\sup_{A\in\B(\R)}\bigg|\int\I_A(x-[x])\Big[(1-\alpha_i)g_i(x)+\alpha_i\sum_{k\in \Z}h(k)g_i(x-k)\Big]dx\nonumber\\
                                                      &\phantom{\sup_{A}\bigg|\int} -\int \I_A(x)g_i(x)dx\bigg|\nonumber\\
                                                      &\leq\sup_{A\in\B(\R)}\sum_{k\in\Z}h(k)\bigg|\int\Big(\I_A(x-[x])G_i(x,k)-\I_A(x)g_i(x)\Big)dx\bigg|.
  \end{align*}
Writing $\int\I_A(x-[x])G_i(x,k)dx$ as $\sum_{l\in\Z}\int_{-\frac{1}{2}}^{\frac{1}{2}}\I_A(x)G_i(x+l,k)dx$, one can bound $\Big|\int\big(\I_A(x-[x])G_i(x,k)-\I_A(x)g_i(x)\big)dx\Big|$ by the sum of the following three terms:
\begin{align*}
 I&=\bigg|\int_{-\frac{1}{2}}^{\frac{1}{2}}\I_A(x)\Big[G_i(x,k)+G_i(x+k,k)-g_i(x)\Big]dx\bigg|\\
  &=\bigg|\int_{-\frac{1}{2}}^{\frac{1}{2}}\I_A(x)\big[\alpha_ig_i(x-k)+(1-\alpha_i)g_i(x+k)\big]dx\bigg|\leq\int_{-\frac{1}{2}}^{\frac{1}{2}}\big(g_i(x-k)+g_i(x+k)\big)dx\\
 II&=\sum_{l\in\Z^*-\{k\}}\int_{-\frac{1}{2}}^{\frac{1}{2}}|G_i(x+l,k)|dx\leq \sum_{l\in\Z^*-\{k\}}\int_{-\frac{1}{2}}^{\frac{1}{2}}\big(g_i(x+l)+g_i(x+l-k)\big)dx\\
 III&=\int_{[-\frac{1}{2},\frac{1}{2}]^c}g_i(x)dx.
\end{align*}
Since $\Big|\int\big(\I_A(x-[x])G_i(x,0)-\I_A(x)g_i(x)\big)dx\Big|\leq \int_{[-\frac{1}{2},\frac{1}{2}]^c}g_i(x)dx$ and $h(0)\leq 1$, we obtain
\begin{align*}
 \|K(Q_{N_i}*Q_{(Y_1,\varepsilon_i)})- Q_{N_i}\big\|_{TV}&\leq \sum_{k\in\Z^*}h(k)\int_{-\frac{1}{2}}^{\frac{1}{2}}\big(g_i(x-k)+g_i(x+k)\big)dx\\
   &\ +\sum_{k\in\Z^*, l\in\Z^*-\{k\}}h(k)\int_{-\frac{1}{2}}^{\frac{1}{2}}\big(g_i(x+l)+g_i(x+l-k)\big)dx +2\int_{[-\frac{1}{2},\frac{1}{2}]^c}g_i(x)dx.
\end{align*}
Using the mean value theorem one can write
\begin{align*}
 \int_{-\frac{1}{2}}^{\frac{1}{2}}\big(g_i(x-k)+g_i(x+k)\big)dx&=\phi\Big(\frac{1/2-k-m_i}{\sigma_i}\Big)-\phi\Big(\frac{-1/2-k-m_i}{\sigma_i}\Big)\\
                                                               &\quad +\phi\Big(\frac{1/2+k-m_i}{\sigma_i}\Big)-\phi\Big(\frac{-1/2+k-m_i}{\sigma_i}\Big)\\
                                                               &=\frac{1}{\sigma_i}\big(\varphi(\xi_{1,k})+\varphi(\xi_{2,k})\big)
\end{align*}
for some $\xi_{1,k}\in\Big[\frac{-1/2-k-m_i}{\sigma_i}, \frac{1/2-k-m_i}{\sigma_i}\Big]$ and $\xi_{2,k}\in\Big[\frac{-1/2+k-m_i}{\sigma_i}, \frac{1/2+k-m_i}{\sigma_i}\Big]$.
In particular, since $|m_i|\leq\frac{1}{3}$ one has that $\varphi(\xi_{j,k})\leq \varphi\Big(\frac{1}{6\sigma_i}\Big)$, $j=1,2$, hence
$$\sum_{k\in\Z^*}h(k)\int_{-\frac{1}{2}}^{\frac{1}{2}}\big(g_i(x-k)+g_i(x+k)\big)dx\leq \sum_{k\in\Z^*}\frac{2h(k)}{\sigma_i}\varphi\Big(\frac{1}{6\sigma_i}\Big)\leq \frac{2}{\sigma_i}\varphi\Big(\frac{1}{6\sigma_i}\Big).$$
In the same way one can write 
$$\int_{-\frac{1}{2}}^{\frac{1}{2}}\big(g_i(x+l)+g_i(x+l-k)\big)dx=\frac{1}{\sigma_i}\big(\varphi(\eta_{1,l})+\varphi(\eta_{2,l-k})\big)$$
for some $\eta_{1,l}\in\Big[\frac{-1/2+l-m_i}{\sigma_i}, \frac{1/2+l-m_i}{\sigma_i}\Big]$ and $\eta_{2,l-k}\in\Big[\frac{-1/2+l-k-m_i}{\sigma_i}, \frac{1/2+l-k-m_i}{\sigma_i}\Big]$.
Then:
\begin{align*}
\sum_{k\in\Z^*, l\in\Z^*-\{k\}}h(k)\int_{-\frac{1}{2}}^{\frac{1}{2}}\big(g_i(x+l)+g_i(x+l-k)\big)dx&\leq \sum_{k\in\Z^*, l\in\Z^*-\{k\}}\frac{h(k)}{\sigma_i}\big(\varphi(\eta_{1,l})+\varphi(\eta_{2,l-k})\big)\\
&\leq \sum_{k\in\Z^*, l\in\Z^*-\{k\}}\frac{h(k)}{\sigma_i}\varphi(\eta_{1,l})+\sum_{k,w\in\Z^*}\frac{h(k)}{\sigma_i}\varphi(\eta_{2,w})\\
&\leq\sum_{k\in\Z^*}\frac{h(k)}{\sigma_i}\sum_{l\in\Z^*}\varphi(\eta_{1,l})+\sum_{k\in\Z^*}\frac{h(k)}{\sigma_i}\sum_{w\in\Z^*}\varphi(\eta_{2,w})\\
&\leq \frac{1}{\sigma_i}\sum_{l\in\Z^*}\big(\varphi(\eta_{1,l})+\varphi(\eta_{2,l})\big).
\end{align*}
Now, $|\eta_{i,l}|\geq\frac{|l|-5/6}{\sigma_i}$, $i=1,2$, so
\begin{align*}
 \frac{1}{\sigma_i}\sum_{l\in\Z^*}\big(\varphi(\eta_{1,l})+\varphi(\eta_{2,l})\big)&\leq \frac{4}{\sigma_i}\varphi\Big(\frac{1}{6\sigma_i}\Big)+\frac{1}{\sigma_i}\sum_{|l|\geq 2}\varphi\Big(\frac{|l|-5/6}{\sigma_i}\Big)\\
 &\leq \frac{4}{\sigma_i}\varphi\Big(\frac{1}{6\sigma_i}\Big)+2\int_{\frac{1}{6\sigma_i}}^{\infty}\varphi(x)dx.
 \end{align*}
Finally, $\int_{[-\frac{1}{2},\frac{1}{2}]^c}g_i(x)dx\leq \int_{[-\frac{1}{6\sigma_i},\frac{1}{6\sigma_i}]^c}\varphi(x)dx=2\phi\Big(-\frac{1}{6\sigma_i}\Big)$.
Using Fact \ref{hp}, these computations imply \eqref{eq:bernoulli}.
 \end{proof}
\begin{remark}
 In the case where $Y_1\equiv 1$ (see Example \ref{ex:1}) one can also consider a, maybe, more natural Markov kernel, that is:
 \begin{equation*}
K(x,A)=\I_A(\Psi(x)),\quad \textnormal{with }\ \Psi(x)=\begin{cases} x &\mbox{if } x\leq \frac{1}{2},\\
x-1 &\mbox{otherwise}.         
 \end{cases}
  \end{equation*}
  However, the rate of convergence in \eqref{eq:bernoulli} turns out to be asymptotically the same regardless of the chosen kernel.

\end{remark}

\begin{lemma}\label{continue}
 Let $(N_i)_{i=1}^n$ and $(\varepsilon_i)_{i=1}^n$ be samples of, respectively, Gaussian random variables $\No(m_i,\sigma_i^2)$ with $|m_i|\leq L$ for some constant $L$ and Bernoulli random variables of parameters $\alpha_i:=\lambda_i e^{-\lambda_i}$. Moreover, let $Y_1$ be a random variable with density $h(\cdot)$ with respect to the Lebesgue measure and denote by $Q_{N_i}$ (resp. $Q_{(Y_1,\varepsilon_i)}$) the law of $N_i$ (resp. $\varepsilon_i Y_1$). 
 Fix a $0<\varepsilon<1$ and define, for all $i$, the Markov kernel
 $$
 K_i(x,A)=\begin{cases} \I_A(x) & \mbox{if }x\in B_i:=[-(L+\sigma_i^{1-\varepsilon}),L+\sigma_i^{1-\varepsilon}], \\ \frac{1}{\sqrt{2\pi\sigma_i^2}}\int_Ae^{-\frac{y^2}{2\sigma_i^2}}dy, & \mbox{if }x\in B_i^c.
\end{cases}
 $$
 Then
 \begin{equation*}
  \big\|\otimes_{i=1}^n K_i(Q_{N_i}*Q_{(Y_1,\varepsilon_i)})-\otimes_{i=1}^n Q_{N_i}\big\|_{TV}\leq \sqrt{2\sum_{i=1}^n\bigg(8\phi(-\sigma_i^{-\varepsilon})+\frac{\alpha_i|m_i|}{\sqrt 2\sigma_i}+2\alpha_i\int_{-2\beta_i}^{2\beta_i}h(y)dy\bigg)}
 \end{equation*}
 where $\phi$ denotes the cumulative distribution of a Gaussian random variable $\No(0,1)$ and $\beta_i=L+\sigma_i^{1-\varepsilon}$.
\end{lemma}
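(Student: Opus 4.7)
The plan is to invoke Fact~\ref{hp}, which reduces the stated product inequality to bounding, for each index $i$ separately,
\[
 \bigl\|K_i(Q_{N_i}*Q_{(Y_1,\varepsilon_i)})-Q_{N_i}\bigr\|_{TV} \;\leq\; 8\phi(-\sigma_i^{-\varepsilon})+\frac{\alpha_i|m_i|}{\sqrt 2\,\sigma_i}+2\alpha_i\int_{-2\beta_i}^{2\beta_i}h(y)\,dy,
\]
since the RHS of the lemma is precisely of the form $\sqrt{2\sum_i(\cdot)_i}$.

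For each $i$ I would write $Q_{N_i}*Q_{(Y_1,\varepsilon_i)}=(1-\alpha_i)Q_{N_i}+\alpha_i R$, with $R$ the law of $N_i+Y_1$, and use convexity of $\|\cdot\|_{TV}$ to split the per-index bound into $(1-\alpha_i)\|K_iQ_{N_i}-Q_{N_i}\|_{TV}+\alpha_i\|K_iR-Q_{N_i}\|_{TV}$. The first summand is handled by a direct density computation: $K_iQ_{N_i}$ agrees with $Q_{N_i}$ on $B_i$ and equals $\p(N_i\notin B_i)g_i^0$ on $B_i^c$, so its total variation to $Q_{N_i}$ is exactly $\p(N_i\notin B_i)$. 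Since $|m_i|\leq L$ and $\beta_i=L+\sigma_i^{1-\varepsilon}$, this is at most $2\phi(-\sigma_i^{-\varepsilon})$.

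For the harder summand $\|K_iR-Q_{N_i}\|_{TV}$, the key is the stationarity $K_i\No(0,\sigma_i^2)=\No(0,\sigma_i^2)$. I would insert $\No(0,\sigma_i^2)$ via the triangle inequality: by Fact~\ref{fact:gaussiane} the piece $\|\No(0,\sigma_i^2)-Q_{N_i}\|_{TV}$ contributes $|m_i|/(\sqrt 2\sigma_i)$, and after multiplication by $\alpha_i$ this gives the middle term of the bound. For the remaining $\|K_iR-\No(0,\sigma_i^2)\|_{TV}$, conditioning on $Y_1=y$ writes $K_iR=\int K_i\No(m_i+y,\sigma_i^2)\,h(y)\,dy$ and $\No(0,\sigma_i^2)=\int K_i\No(0,\sigma_i^2)\,h(y)\,dy$, so it is enough to bound $\|K_i\No(m_i+y,\sigma_i^2)-\No(0,\sigma_i^2)\|_{TV}$ pointwise in $y$ and integrate. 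A direct density computation (the signed measure has density $g_i^{m_i+y}\I_{B_i}-\p(|\No(m_i+y,\sigma_i^2)|\leq\beta_i)g_i^0$, integrates to zero, and has positive part supported in $B_i$ and bounded by $g_i^{m_i+y}$) gives the clean estimate $\|K_i\No(\mu,\sigma_i^2)-\No(0,\sigma_i^2)\|_{TV}\leq\p(|\No(\mu,\sigma_i^2)|\leq\beta_i)$.

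Finally, I would split the $y$-integral at $|y|=2\beta_i$. For $|y|\leq 2\beta_i$ the bound is trivially $1$, producing the $2\alpha_i\int_{-2\beta_i}^{2\beta_i}h(y)\,dy$ term (with room for the factor $2$). For $|y|>2\beta_i$ one uses $|m_i|\leq L\leq \beta_i$ to deduce $|m_i+y|\geq\beta_i+\sigma_i^{1-\varepsilon}$, hence on the event $\{|\No(m_i+y,\sigma_i^2)|\leq\beta_i\}$ the Gaussian deviation is at least $\sigma_i^{1-\varepsilon}$, giving $\p(\cdot)\leq 2\phi(-\sigma_i^{-\varepsilon})$; these tail contributions are comfortably absorbed into $8\phi(-\sigma_i^{-\varepsilon})$. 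The main obstacle is ensuring that the split threshold is exactly $2\beta_i$ as in the statement rather than some shifted interval: this relies on the particular choice $\beta_i=L+\sigma_i^{1-\varepsilon}$ together with the assumption $|m_i|\leq L$, so that the implication $|y|>2\beta_i\Rightarrow|m_i+y|-\beta_i\geq\sigma_i^{1-\varepsilon}$ holds with the correct constants.
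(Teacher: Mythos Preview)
Your approach is correct and yields the claimed bound (in fact with slightly better constants), but it proceeds along a different line from the paper. The paper does not exploit the mixture decomposition $(1-\alpha_i)Q_{N_i}+\alpha_iR$; instead it fixes a Borel set $A$ and splits the kernel's output according to whether it lands in $B_i$ or in $B_i^c$, bounding
\[
I=\sup_A\bigl|K_i(\cdot)(A\cap B_i)-Q_{N_i}(A\cap B_i)\bigr|,\qquad
II=\sup_A\bigl|K_i(\cdot)(A\cap B_i^c)-Q_{N_i}(A\cap B_i^c)\bigr|
\]
separately. The same three ingredients appear---Gaussian tails outside $B_i$, the probability $\p(N_i+Y_1\in B_i)$, and $\|\No(0,\sigma_i^2)-\No(m_i,\sigma_i^2)\|_{TV}$---but they are combined via this spatial split rather than via convexity in the mixture weight. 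Your route (convexity, triangle through $\No(0,\sigma_i^2)$, and the clean pointwise estimate $\|K_i\No(\mu,\sigma_i^2)-\No(0,\sigma_i^2)\|_{TV}\le\p(|\No(\mu,\sigma_i^2)|\le\beta_i)$) is arguably more transparent about where each term comes from, and saves a couple of constants.

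One correction: the ``stationarity'' $K_i\No(0,\sigma_i^2)=\No(0,\sigma_i^2)$ that you call the key is false. Taking $A=B_i^c$ gives $K_i\No(0,\sigma_i^2)(B_i^c)=\p(\tilde N_i\in B_i^c)^2\neq\p(\tilde N_i\in B_i^c)$. Fortunately your argument does not actually use it: the inequality
\[
\|K_iR-\No(0,\sigma_i^2)\|_{TV}\le\int\|K_i\No(m_i+y,\sigma_i^2)-\No(0,\sigma_i^2)\|_{TV}\,h(y)\,dy
\]
follows from plain convexity together with the trivial identity $\No(0,\sigma_i^2)=\int\No(0,\sigma_i^2)\,h(y)\,dy$, and your density computation for the integrand is carried out directly, without any fixed-point property of $K_i$. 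Simply drop the stationarity remark. (Similarly, ``$K_iQ_{N_i}$ agrees with $Q_{N_i}$ on $B_i$'' is not literally true, since mass from $B_i^c$ is redistributed onto $B_i$ via $g_i^0$; but your conclusion $\|K_iQ_{N_i}-Q_{N_i}\|_{TV}\le\p(N_i\notin B_i)$ is correct, as the signed difference has the form $\p(N_i\in B_i^c)g_i^0-\,g_i\I_{B_i^c}$.)
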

\begin{proof}
 The total variation distance between the measures $K_i(Q_{N_i}*Q_{(Y_1,\varepsilon_i)})$ and $Q_{N_i}$ is bounded by the sum of the following two terms:
 \begin{align*}
  I&=\sup_{A\in \B(\R)}|K_i(Q_{N_i}*Q_{(Y_1,\varepsilon_i)})(A\cap B_i)-Q_{N_i}(A\cap B_i)|,\\
  II&=\sup_{A\in \B(\R)}|K_i(Q_{N_i}*Q_{(Y_1,\varepsilon_i)})(A\cap B_i^c)-Q_{N_i}(A\cap B_i^c)|.
 \end{align*}
Denote by $Q_{\tilde N_i}$ the distribution of the Gaussian random variable $\tilde N_i \sim \No(0,\sigma_i^2)$, then
\begin{align*}
 I&=\sup_{A\in \B(\R)}\bigg|\alpha_i\Big(\p(N_i+Y_1\in A\cap B_i)+\p(\tilde N\in A\cap B_i)\p(N_i+Y_1\in B_i^c)-\p(N_i\in A\cap B_i)\Big)\\
 &\phantom{\sup_{A\in \B(\R)}\bigg|\alpha_i}+(1-\alpha_i)\p(\tilde N_i\in A\cap B_i)\p(N_i\in B_i^c) \bigg|\\
 &\leq\sup_{A\in \B(\R)}\alpha_i\bigg(\p\big(N_i+Y_1\in A\cap B_i\big)+\Big|\p(N_i+Y_1\in B_i^c)\Big[\p(\tilde N\in A\cap B_i)-\p(\tilde N\in A\cap B_i)\Big]\Big|\\
  &\phantom{\leq\sup_{A\in \B(\R)}\alpha_i}+\Big|\p(N_i\in A\cap B_i)\Big[\p(N_i+Y_1\in B_i^c)-1\Big]\Big|\bigg)+\p(N_i\in B_i^c)\\
 &\leq \alpha_i\big(2\p(N_i+Y_1\in B_i)+\|Q_{\tilde N_i}-Q_{N_i}\|_{TV}\big)+\p(N_i\in B_i^c)\\
 \textnormal{ and }\\
 II&=\sup_{A\in \B(\R)}\big|\p(\tilde N_i\in  A\cap B_i^c)\p(N_i+\varepsilon_iY_1\in B_i^c)-\p(N_i\in A\cap B_i^c)\big|\\
  &\leq \p(\tilde N_i\in B_i^c)+\p(N_i\in B_i^c).
\end{align*}
Now observe that 
\begin{itemize}
 \item $\p(N_i+Y_1\in B_i)\leq \p(|Y_1|>2\beta_i)\p(|N_i|>\beta_i)+\p(|Y_1|\leq 2\beta_i)\leq \p(N_i\in B_i^c)+\int_{-2\beta_i}^{2\beta_i}h(y)dy$,
 \item $\p(N_i\in B_i^c)=\phi\Big(-\frac{L+\sigma_i^{1-\varepsilon}+m_i}{\sigma_i}\Big)+1-\phi\Big(\frac{L+\sigma_i^{1-\varepsilon}-m_i}{\sigma_i}\Big)\leq \phi(-\sigma_i^{-\varepsilon})+1-\phi(\sigma_i^{-\varepsilon})=2\phi(-\sigma_i^{-\varepsilon})$
 \item $\p(\tilde N_i\in B_i^c)=\phi\Big(-\frac{L+\sigma_i^{1-\varepsilon}}{\sigma_i}\Big)+1-\phi\Big(\frac{L+\sigma_i^{1-\varepsilon}}{\sigma_i}\Big)\leq 2\phi(-\sigma_i^{-\varepsilon})$
\end{itemize}
Combining these bounds with Fact \ref{fact:gaussiane} we get:
$$I+II\leq 8\phi(\sigma_i^{-\varepsilon})+\alpha_i\int_{-2\beta_i}^{2\beta_i}h(y)dy+\alpha_i\frac{|m_i|}{\sqrt 2 \sigma_i}.$$
An application of Fact \ref{hp} allows us to conclude the proof. 

\end{proof}
\subsection{Asymptotic equivalence between discretely and continuously observed Gaussian processes}\label{sec:bl}
Let us denote by $\tilde \Qi_n$ the statistical model associated with the observation of the increments $(y_{t_i}-y_{t_{i-1}})_{i=1}^n$ of $(y_t)$ defined as in \eqref{eq:wn}, then we have:
\begin{prop}\label{prop:gaussiane}
 Suppose that the parameter space $\F$ fulfills Assumption (F2) and let $\sigma(\cdot)>0$ be a given absolutely continuous functions on $\R$ satisfying \eqref{eq:lnsigma}.
 Then, the statistical models $\Wh_n$ and $\tilde \Qi_n$ are asymptotically equivalent as $n$ goes to infinity. An upper bound for the rate of convergence is given by $O\Big(\sup_{f\in\F}\int_0^{T_n}\frac{(f(s)-\bar f_n(s))^2}{\sigma_n^2(s)}ds+T_n\Delta_n\Big)$.
\end{prop}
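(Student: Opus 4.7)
The strategy follows a Brown--Low-type construction, establishing the two deficiencies $\delta(\Wh_n,\tilde\Qi_n)$ and $\delta(\tilde\Qi_n,\Wh_n)$ separately. The first is immediate: the map $(y_t)_{t\in[0,T_n]}\mapsto(y_{t_i}-y_{t_{i-1}})_{i=1}^n$ is a deterministic measurable coordinate projection, so Property \ref{fatto3} yields $\delta(\Wh_n,\tilde\Qi_n)=0$.

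For the reverse deficiency I would define an explicit Markov kernel $K\colon(\R^n,\B(\R^n))\to(C,\Ci)$. Given an input $(z_1,\ldots,z_n)$, the kernel outputs a continuous path $\bar y$ produced by patching together, across the subintervals $[t_{i-1},t_i]$, independent Gaussian bridges of volatility $\sigma_n(\cdot)$ pinned at the endpoints $0$ and $z_i-\bar f_n(t_i)(t_i-t_{i-1})$, then superimposed on the affine drift $t\mapsto\bar f_n(t_i)(t-t_{i-1})$, and concatenated starting from the cumulative level $\sum_{j<i}z_j$. Denoting by $\tilde P_{T_n}^{(g,\sigma_n^2)}$ the law of the Gaussian increments $(y_{t_i}-y_{t_{i-1}})_{i=1}^n$ associated with drift $g$ and volatility $\sigma_n$, the key feature of $K$ is the exact identity $K\cdot\tilde P_{T_n}^{(\bar f_n,\sigma_n^2)}=P_{T_n}^{(\bar f_n,\sigma_n^2,0)}$; this is the standard Gaussian disintegration of a continuous trajectory with drift $\bar f_n$ given its values at the grid $\{t_i\}$.

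Combining Property \ref{delta0}, the triangle inequality and the contractivity of Markov kernels then yields
\begin{align*}
\delta(\tilde\Qi_n,\Wh_n)&\leq\sup_{f\in\F}\Bigl(\|\tilde P_{T_n}^{(f,\sigma_n^2)}-\tilde P_{T_n}^{(\bar f_n,\sigma_n^2)}\|_{TV}\\
&\quad+\|P_{T_n}^{(\bar f_n,\sigma_n^2,0)}-P_{T_n}^{(f,\sigma_n^2,0)}\|_{TV}\Bigr).
\end{align*}
The first summand compares two product Gaussians sharing variances $\sigma_i^2=\int_{t_{i-1}}^{t_i}\sigma_n^2(s)\,ds$ and differing only in their means $\int_{t_{i-1}}^{t_i}f\,ds$ versus $\bar f_n(t_i)(t_i-t_{i-1})$; Facts \ref{fact:gaussiane}--\ref{hp}, a Cauchy--Schwarz step on each subinterval, and the uniform log-derivative bound \eqref{eq:lnsigma} (used to exchange $\sigma_i^2$ for $\sigma_n^2(s)(t_i-t_{i-1})$ up to a factor $1+O(\Delta_n)$) collapse the bound onto the announced $\int_0^{T_n}(f-\bar f_n)^2/\sigma_n^2\,ds+T_n\Delta_n$. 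The second summand is handled directly by Fact \ref{fact:processigaussiani}, delivering the same form.

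The main obstacle is the rigorous construction of the Markov kernel $K$: one must verify both its measurability into the path space $(C,\Ci)$ and the exact identity $K\cdot\tilde P_{T_n}^{(\bar f_n,\sigma_n^2)}=P_{T_n}^{(\bar f_n,\sigma_n^2,0)}$, which amounts to a clean disintegration of the Gaussian measure $P_{T_n}^{(\bar f_n,\sigma_n^2,0)}$ along the finite-dimensional marginal at the grid $\{t_i\}$. A secondary subtlety is extracting the $T_n\Delta_n$ term cleanly: the $1+O(\Delta_n)$ multiplicative errors produced by \eqref{eq:lnsigma} on each of the $n\simeq T_n/\Delta_n$ subintervals must aggregate to exactly $O(T_n\Delta_n)$ inside the Hellinger-squared computation underlying Fact \ref{fact:gaussiane}.
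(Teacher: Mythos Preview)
Your bridge kernel $K$ is parameter-dependent: as you describe it, on each subinterval it pins a $\sigma_n$-bridge at $0$ and $z_i-\bar f_n(t_i)(t_i-t_{i-1})$ and then adds back the affine drift $\bar f_n(t_i)(t-t_{i-1})$. When $\sigma_n$ is not constant on $[t_{i-1},t_i]$, these two operations do not cancel. Concretely, writing $\rho_s=\int_{t_{i-1}}^{t_{i-1}+s}\sigma_n^2\big/\int_{t_{i-1}}^{t_i}\sigma_n^2$, the mean of your reconstructed path at local time $s$ is $\rho_s z_i+f(t_i)\bigl[s-\rho_s(t_i-t_{i-1})\bigr]$, which depends on $f(t_i)$ unless $\rho_s=s/(t_i-t_{i-1})$. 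Equivalently: the raw increments $(y_{t_i}-y_{t_{i-1}})_i$ are \emph{not} sufficient for the piecewise-constant-drift family $\{P_{T_n}^{(\bar f_n,\sigma_n^2,0)}:f\in\F\}$ when $\sigma_n$ varies, because the Girsanov density factors through $\sum_i f(t_i)\int_{t_{i-1}}^{t_i}\sigma_n^{-2}\,dy$, not through the increments. So there is no single kernel $K$ with $K\tilde P_{T_n}^{(\bar f_n,\sigma_n^2)}=P_{T_n}^{(\bar f_n,\sigma_n^2,0)}$ for all $f$, and replacing $\bar f_n$ by a fixed drift (say $0$) destroys the exact identity without giving you a small error. (A minor related point: the projection to increments is not a sufficient statistic for $\Wh_n$, so Property~\ref{fatto3} is not the right citation for $\delta(\Wh_n,\tilde\Qi_n)=0$; that deficiency is zero simply because the deterministic projection is a Markov kernel hitting the target exactly.)

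The paper's route avoids this by working with the $\sigma_n^2$-weighted integrals $S(\omega)=\bigl(\int_{t_{i-1}}^{t_i}\sigma_n^{-2}\,d\omega\bigr)_{i=1}^n$ instead of the raw increments. For the piecewise-constant-drift family $\bar\mo_n$ this $S$ \emph{is} sufficient (the Girsanov exponent is exactly $\sum_i f(t_i)S_i-\tfrac12\sum_i f(t_i)^2\int\sigma_n^{-2}$), so Property~\ref{fatto3} gives an exact equivalence $\bar\mo_n\leftrightarrow\Si_n$, where $\Si_n$ has coordinates $\No\bigl(f(t_i)\int_{t_{i-1}}^{t_i}\sigma_n^{-2},\int_{t_{i-1}}^{t_i}\sigma_n^{-2}\bigr)$. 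The passage from $\Si_n$ to $\tilde\Qi_n$ is then a product-Gaussian comparison in which both means and variances differ slightly; the variance mismatch $\int_{t_{i-1}}^{t_i}\sigma_n^{-2}$ versus $\bigl(\int_{t_{i-1}}^{t_i}\sigma_n^2\bigr)^{-1}(t_i-t_{i-1})^2$ is exactly where \eqref{eq:lnsigma} enters and produces the $T_n\Delta_n$ term. Your triangle-inequality bookkeeping and the use of Facts~\ref{hp}--\ref{fact:processigaussiani} are fine; the missing ingredient is this choice of statistic.
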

\begin{proof}
 The proof is based on the same ideas as in \cite{BL}. However, since some modifications are needed, we include a complete proof for the convenience of the reader.
 
STEP 1: We start by considering the statistical model, $\bar\mo_n$, associated with a Gaussian process on $[0,T_n]$ with local characteristic $(\bar f_n(t),\sigma_n^2(t),0)_{t\in[0,T_n]}$ (see \eqref{fn} for the definition of $\bar f_n(\cdot)$). Fact \ref{fact:processigaussiani} guarantees that
$$\Delta(\mo_n,\bar\mo_n)=O\Big(\sup_{f\in\F}\int_0^{T_n}\frac{(f(s)-\bar f_n(s))^2}{\sigma_n^2(s)}ds\Big).$$

 STEP 2: By means of the Fisher factorization theorem, one can easily prove that the statistic defined by
 $$S(\omega)=\bigg(\int_{0}^{t_1}\frac{d\omega_t}{\sigma_n^2(t)},\dots,\int_{t_{n-1}}^{T_n}\frac{d\omega_t}{\sigma_n^2(t)}\bigg)$$ 
 is a sufficient statistic for the family of probabilities $\{P_{T_n}^{(\bar f_n,\sigma_n^2,0)}:f\in \F\}$.
Moreover, the law of $S$ under $P_{T_n}^{(\bar f_n,\sigma_n^2,0)}$ is the law on $\R^n$ of a vector composed by $n$ independent Gaussian random variable $\mu_i:=\No\Big(f(t_i)\int_{t_{i-1}}^{t_i}\frac{dt}{\sigma_n^2(t)},\int_{t_{i-1}}^{t_i}\frac{dt}{\sigma_n^2(t)}\Big)$, $i=1,\dots,n$. Let us denote by $P_{i,f}$ the law on $\R$ of $\mu_i$ and by $\Si_n$ the statistical model associated with the statistic $S$, that is
$$\Si_n=\big\{\R^n,\B(\R^n),(\otimes_{i=1}^n P_{i,f}:f\in\F)\big\}.$$
Then, by using Property \ref{fatto3}, we get $\Delta(\bar\mo_n,\Si_n)=0$.
An application of the mean value theorem yields 
$$\int_{t_{i-1}}^{t_i}\frac{ds}{\sigma_n^2(s)}=\frac{(t_i-t_{i-1})}{\sigma_n^2(\xi_i)}, \quad\textnormal{ for a certain }\xi_i \textnormal{ in }[t_{i-1},t_i].$$
This allows us to pass from the model $\Si_n$ to the equivalent one
$$\tilde\Si_n=\big\{\R^n,\B(\R^n),(\otimes_{i=1}^n \tilde P_{i,f}:f\in\F)\big\},$$
with $\tilde P_{i,f}$ denoting the distribution of a Gaussian random variable $\No(f(t_i)(t_i-t_{i-1}),\sigma_n^2(\xi_i)(t_{i}-t_{i-1}))$.

STEP 3: The last step consists in bounding the $\Delta$-distance between $\tilde\Si_n$ and $\tilde \Qi_n$.

Property \ref{delta0} and Facts \ref{hp}--\ref{fact:gaussiane} yield:
\begin{equation*}
 \Delta(\tilde\Si_n,\tilde \Qi_n)\leq \sup_{f\in \F}\sum_{i=1}^n\Bigg[\bigg(1-\frac{\sigma_n^2(\xi_i)(t_{i}-t_{i-1})}{\int_{t_{i-1}}^{t_i}\sigma_n^2(s)ds}\bigg)^2+\frac{\Big(\int_{t_{i-1}}^{t_i}\Big(f(t_i)-f(s)\Big)ds\Big)^2}{2\int_{t_{i-1}}^{t_i}\sigma_n^2(s)ds}\Bigg].
\end{equation*}
For all $i=1,\dots,n$, let $\eta_i$ and $\gamma_i$ be elements in $[t_{i-1},t_i]$ such that:
\begin{equation*}
 \int_{t_{i-1}}^{t_i}\sigma^2(s)ds=\sigma^2(\eta_i)(t_i-t_{i-1}),\quad \int_{t_{i-1}}^{t_i} f(s)ds=f(\gamma_i)(t_i-t_{i-1}).
\end{equation*}
By means of a Taylor expansion of $\sigma_n(\xi_i)/\sigma_n(\eta_i)$ we obtain
 $$\frac{\sigma_n(\xi_i)}{\sigma_n(\eta_i)}=1+\frac{\sigma_n'(\eta_i)}{\sigma_n(\eta_i)}(\xi_i-\eta_i)+O(\xi_i-\eta_i)^2;$$
 hence, thanks to assumption \eqref{eq:lnsigma}, we have
 $$\bigg|\frac{\sigma_n(\xi_i)}{\sigma_n(\eta_i)}\bigg|\leq 1+ C_1(t_i-t_{i-1})+O\big((t_i-t_{i-1})^2\big).$$
 This means that
 $$\Delta\big(\Si_n,\tilde\Qi_n\big)\leq \sup_{f\in \F}\sum_{i=1}^n\frac{(f(t_i)-f(\gamma_i))^2}{2\sigma_n^2(\eta_i)}(t_{i}-t_{i-1})+O(T_n\Delta_n).$$
 Here, the constant $C_1$ is hidden in the $O$. Observe that $\sum_{i=1}^n\frac{(f(t_i)-f(\gamma_i))^2}{2\sigma_n^2(\eta_i)}(t_{i}-t_{i-1})$ is less than $\int_0^{T_n}\frac{(f(s)-\bar f_n(s))^2}{2\sigma_n^2(s)}ds$. Indeed, on the one hand one can write:
 $$\frac{(f(\xi_i)-f(t_i))^2}{\sigma_n^2(\eta_i)}=\frac{\Big(\int_{t_{i-1}}^{t_i}\big(f(s)-f(t_i)\big)ds\Big)^2}{(t_i-t_{i-1})\int_{t_{i-1}}^{t_i}\sigma_n^2(s)ds},$$
 on the other hand, by means of the Hölder inequality, one has
 $$\Big(\int_{t_{i-1}}^{t_i}\big(f(s)-f(t_i)\big)ds\Big)^2\leq \int_{t_{i-1}}^{t_i}\sigma_n^2(s)ds\int_{t_{i-1}}^{t_i}\frac{\big(f(s)-f(t_i)\big)^2}{\sigma_n^2(s)}ds.$$
 Combining these expressions one finds $\displaystyle \sum_{i=1}^n\frac{(f(t_i)-f(\gamma_i))^2}{2\sigma_n^2(\eta_i)}\leq \sum_{i=1}^n\frac{1}{t_{i}-t_{i-1}}\int_{t_{i-1}}^{t_i}\frac{(f(s)-f(t_i))^2}{2\sigma_n^2(s)}ds$, as claimed.
\end{proof}
\begin{prop}\label{prop:stage}
Suppose that for every $f \in \F$, $\int_0^{T_n}\frac{f^2(s)}{\sigma_n^2(s)}ds<\infty$. Then, the statistical models $\mo_n$ and $\Wh_n$ are equivalent.
\end{prop}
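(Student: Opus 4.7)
The plan is to exhibit a sufficient statistic $S\colon (D,\D) \to (C,\Ci)$ whose push-forward distribution under each $P_{T_n}^{(f,\sigma_n^2,\lambda G)}$ coincides with the corresponding Gaussian measure $P_{T_n}^{(f,\sigma_n^2,0)}$, and then invoke Property \ref{fatto3} to conclude immediately that $\Delta(\mo_n,\Wh_n)=0$.

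The natural candidate is the continuous-part projection $S(\omega) = \omega^c$, in the notation fixed in Section \ref{ap}. First I would note that $S$ is $\D/\Ci$-measurable: for every $t\geq 0$, the map $\omega\mapsto \omega_t^c = \omega_t - \sum_{r\leq t}\Delta\omega_r$ is $\D_t$-measurable, and right-continuity lets one go from the finite-dimensional projections to the continuous-path $\sigma$-field $\Ci$. Second, the excerpt already records that $(\{x_t^c\}, P_{T_n}^{(f,\sigma_n^2,\lambda G)})$ has the same law as $(\{x_t\}, P_{T_n}^{(f,\sigma_n^2,0)})$, so the image law of $S$ under $P_{T_n}^{(f,\sigma_n^2,\lambda G)}$ is exactly the probability $P_{T_n}^{(f,\sigma_n^2,0)}$ indexing $\Wh_n$ at the same $f$. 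The assumption $\int_0^{T_n} f^2(s)/\sigma_n^2(s)\,ds < \infty$ is used precisely to guarantee that the drift in \eqref{eq:Xcont} is well-defined and that $P_{T_n}^{(f,\sigma_n^2,0)}$ is a bona fide probability on $(C,\Ci)$.

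The crux is the sufficiency of $S$. By the L\'evy--It\^o decomposition for additive processes, the continuous part $\{x_t^c\}$ and the pure-jump part $\{x_t^d\}$ are independent under $P_{T_n}^{(f,\sigma_n^2,\lambda G)}$, and the law of $\{x_t^d\}$ depends only on the L\'evy measure $\nu_t = \lambda(t)G$, never on $f$. Hence, for any $A\in\D$, a regular conditional probability of $\omega$ given $\omega^c$ can be written as the law of $\omega^c + \tilde\omega^d$ where $\tilde\omega^d$ is an independent copy of the jump part; this conditional law is therefore independent of $f$, which is the sufficiency requirement. Applying Property \ref{fatto3} then yields $\Delta(\mo_n,\Wh_n)=0$.

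The only mild obstacle is the formal bookkeeping in the infinite-dimensional Skorokhod setting: one must verify that the pathwise decomposition $\omega = \omega^c + \omega^d$ together with the independence of $x^c$ and $x^d$ truly gives a $\sigma(S)$-measurable version of the conditional law that is independent of $f$. This is standard once one quotes the L\'evy--It\^o decomposition, and no additional estimate is needed beyond what is already established in Section \ref{ap}.
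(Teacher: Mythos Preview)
Your argument is correct and lands at the same endpoint as the paper: both use the statistic $S(\omega)=\omega^c$ and conclude via Property~\ref{fatto3}. The difference is in how sufficiency of $S$ is established. The paper invokes the Girsanov theorem to write the Radon--Nikodym density
\[
\frac{dP_{T_n}^{(f,\sigma_n^2,\lambda G)}}{dP_{T_n}^{(0,\sigma_n^2,\lambda G)}}(x)=\exp\bigg(\int_0^{T_n}\frac{f(s)}{\sigma_n^2(s)}\,dx_s^c-\frac{1}{2}\int_0^{T_n}\frac{f^2(s)}{\sigma_n^2(s)}\,ds\bigg),
\]
which depends on the path only through $x^c$; sufficiency then follows from the factorization criterion. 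Your route via the L\'evy--It\^o decomposition and the independence of $x^c$ and $x^d$ is equally valid and arguably more structural, but it makes the integrability hypothesis $\int_0^{T_n} f^2/\sigma_n^2\,ds<\infty$ look incidental: that condition is exactly what ensures the Girsanov density above is well defined, whereas your independence argument does not really use it (your stated reason---that it is needed for $P_{T_n}^{(f,\sigma_n^2,0)}$ to be a probability on $(C,\Ci)$---is not quite right, since the process in \eqref{eq:Xcont} exists under milder integrability of $f$ and $\sigma$). In short, the paper's proof explains the hypothesis, while yours shows that the conclusion in fact holds a bit more generally.
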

\begin{proof}
 The Girsanov theorem assures that the measure $P^{(0,\sigma_n^2,\lambda G)}$ dominates the measure $P^{(f,\sigma_n^2,\lambda G)}$ and the density is given by
 $$\frac{dP^{(f,\sigma_n^2,\lambda G)}}{dP^{(0,\sigma_n^2,\lambda G)}}(x)=\exp\bigg(\int_0^{T_n}\frac{f(s)}{\sigma_n^2(s)}dx_s^c-\frac{1}{2}\int_0^{T_n}\frac{f^2(s)}{\sigma_n^2(s)}ds\bigg).$$
 We conclude the proof using Fact \ref{fatto3} applied to the statistic $S:\omega\to \omega^c$.
\end{proof}

 \subsection{Proofs of Theorems \ref{teo1} and \ref{teo2}}\label{sec:fine}
In order to prove our results we need to introduce the following notations:
\begin{align*}
 m_i&=\int_{t_{i-1}}^{t_i}f(s)ds,\quad \sigma_i^2=\int_{t_{i-1}}^{t_i}\sigma_n^2(s)ds,\quad
\lambda_i=\int_{t_{i-1}}^{t_i}\lambda(s)ds,\quad \alpha_i=\lambda_ie^{-\lambda_i},\quad i=1,\dots,n.
\end{align*}
 As a preliminary remark observe that the model $\mathscr{Q}_n$ is equivalent to the statistical model that observes the $n$ increments $X_{t_i}-X_{t_{i-1}}$ of \eqref{X}. Let us denote by $\mathscr{\hat Q}_n$ this latter and recall that the law of $X_{t_i}-X_{t_{i-1}}$ is the convolution product between the Gaussian law $\No\big(m_i,\sigma_i^2\big)$ and the law of the variable $\sum_{j=1}^{P_i}Y_j$, where $P_i$ is Poisson with intensity $\lambda_i$.
Regardless of the continuous or discrete nature of $Y_1$, the previous remark and Proposition \ref{prop:stage} allow us to state that 
  $\Delta(\mo_n,\mathscr Q_n)=\Delta(\mo_n,\mathscr{\hat Q}_n)=\Delta(\Wh_n,\mathscr{\hat Q}_n)=\Delta(\mathscr Q_n,\Wh_n)$. Now, to control $\Delta(\mo_n,\mathscr{\hat Q}_n)$ suppose first that $\Gi$ satisfies Assumption (G1). On the one hand, for $n$ big enough, $|m_i|\leq B(t_i-t_{i-1})\leq\frac{1}{3}$, hence we can apply Lemmas \ref{bernoulli}--\ref{discrete} obtaining the bound:
$$\Delta\big(\mathscr{\hat Q}_n, \mathscr{\tilde Q}_n\big)\leq 2\sqrt{\sum_{i=1}^n\lambda_i^2}+\sqrt{2\sum_{i=1}^n\bigg(\frac{6}{\sigma_i}\varphi\Big(\frac{1}{6\sigma_i}\Big)+4\phi\Big(\frac{-1}{6\sigma_i}\Big)\bigg)}.$$
Here we have implicitly used the following fact:

\emph{Let $P_i$ be a probability measure on $(E_i,\mathcal{E}_i)$ and $K_i$ a Markov kernel on $(G_i,\mathcal G_i)$. One can then define a Markov kernel $K$ on $(\prod_{i=1}^n E_i,\otimes_{i=1}^n \mathcal{G}_i)$ such that $K(\otimes_{i=1}^nP_i)=\otimes_{i=1}^nK_iP_i$:
 $$K(x_1,\dots,x_n; A_1\times\dots\times A_n):=\prod_{i=1}^nK_i(x_i,A_i),\quad \forall x_i\in E_i,\ \forall A_i\in \mathcal{G}_i.$$}
Also, observe that 
 $$2\sqrt{\sum_{i=1}^n\lambda_i^2}+\sqrt{2\sum_{i=1}^n\bigg(\frac{6}{\sigma_i}\varphi\Big(\frac{1}{6\sigma_i}\Big)+4\phi\Big(\frac{-1}{6\sigma_i}\Big)\bigg)}=O\big(\sqrt{\Delta_n}\big),$$
 where, in the leading term of the $O$, a constant $L_2$ is hidden.
 On the other hand, thanks to Proposition \ref{prop:gaussiane} we have:
\begin{align*}
\Delta(\mathscr{\tilde Q}_n, \Wh_n)&\leq O\bigg(\sup_{f\in \F}\int_{0}^{T_n}\frac{(f(t)-\bar{f}_n(t))^2}{\sigma_n^2(t)}dt+T_n\Delta_n\bigg).
\end{align*}
We then obtain the inequality stated in Theorem \ref{teo1} by means of the triangular inequality.

In the same way, using Lemmas \ref{bernoulli}, \ref{continue} (taking $\varepsilon=\frac{1}{2}$) and Proposition \ref{prop:gaussiane} one can show the inequality in Theorem \ref{teo2}. Remark that one can actually choose any $0<\varepsilon\leq\frac{1}{2}$. Smaller values of $\varepsilon$ give better bounds for the term involving $\beta_i$ in Theorem \ref{teo2}, but, if $\varepsilon\leq \frac{1}{2}$, under the hypotheses of Theorem \ref{teo}, the leading term is $\sum_i\frac{\alpha_iB(t_i-t_{i_1})}{\sqrt 2\sigma_i}=O\big(\sqrt{\Delta_n}\big)$ (here we hide the constants $L_1$, $B$ and $m_\sigma$). 

 \appendix
\section{Proofs of certain properties of the Le Cam $\Delta$-distance}
\begin{proof}[Proof of Fact \ref{fact:gaussiane}]
 By symmetry, we can suppose $\sigma_1\geq \sigma_2$. Denoting by $g_i$ the density of $Q_i$ with respect to Lebesgue, we have:
 $$\frac{g_1}{g_2}(x)=\frac{\sigma_2}{\sigma_1}\exp\bigg(\frac{(x-\mu_2)^2}{2\sigma_2^2}-\frac{(x-\mu_1)^2}{2\sigma_1^2}\bigg).$$
 Thus the Kullback-Leibler divergence is
 \begin{align*}
  D(Q_1,Q_2)=\int_{\R}g_1(x)\ln\frac{g_1(x)}{g_2(x)}dx&=\ln\frac{\sigma_2}{\sigma_1}+\int_{\R}\bigg(\frac{(x-\mu_2)^2}{2\sigma_2^2}-\frac{(x-\mu_1)^2}{2\sigma_1^2}\bigg)g_1(x)dx\\
                                         &=\ln\frac{\sigma_2}{\sigma_1}+\frac{1}{2}\Big(\frac{\sigma_1^2}{\sigma_2^2}-1\Big)+\frac{(\mu_1-\mu_2)^2}{2\sigma_1^2}.
 \end{align*}
Let $r=\frac{\sigma_1}{\sigma_2}\geq 1$ and observe that
$$-\ln r+\frac{1}{2}(r^2-1)\leq (r-1)^2.$$ 
It is well known (see, e.g. Lemma 2.4 in \cite{tsy}) that the total variation distance is bounded by the square root of the Kullback-Leibler divergence, in this way we obtain:
$$\|Q_1-Q_2\|_{TV}\leq \sqrt{\bigg(1-\frac{\sigma_1}{\sigma_2}\bigg)^2+\frac{(\mu_1-\mu_2)^2}{2\sigma_2^2}}.$$
\end{proof}
\begin{lemma}\label{propL}
Let $g_i$, $i=1,2$ be the density of a Gaussian random variable $\No(\mu_i,\sigma^2)$. Then,
\begin{equation}
L_1(g_1,g_2)=\E\bigg|\exp\bigg(X-\frac{(\mu_2-\mu_1)^2}{2\sigma^2}\bigg)-1\bigg|=2\bigg[1-2\phi\Big(\frac{\mu_2-\mu_1}{2\sigma}\Big)\bigg]
\end{equation}
where $X\sim\mathcal{N}\bigg(0,\frac{(\mu_2-\mu_1)^2}{2\sigma^2}\bigg)$ and $\phi$ is the cumulative distribution function of a Gaussian random variable $\mathcal{N}(0,1)$.
\end{lemma}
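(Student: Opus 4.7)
The plan is to treat the two equalities separately. For the first, I would rely on the explicit form of the likelihood ratio between two Gaussians with common variance,
\begin{equation*}
\frac{g_2(x)}{g_1(x)}=\exp\!\left(\frac{(\mu_2-\mu_1)(x-\mu_1)}{\sigma^2}-\frac{(\mu_2-\mu_1)^2}{2\sigma^2}\right),
\end{equation*}
which under $g_1$ expresses $g_2/g_1$ as the exponential of a centered Gaussian functional of $x-\mu_1$ (with explicit variance), minus the constant $(\mu_2-\mu_1)^2/(2\sigma^2)$. Starting from the standard rewriting $L_1(g_1,g_2)=\int g_1|1-g_2/g_1|\,dx=\E|1-g_2(X)/g_1(X)|$ with $X\sim g_1$, the first equality of the lemma is then obtained by identifying that centered Gaussian functional with the variable called $X$ in the statement.

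For the second equality I would argue by direct integration. Assume without loss of generality $\mu_2\geq\mu_1$; then $\{g_1\geq g_2\}=\{x\leq(\mu_1+\mu_2)/2\}$, so
\begin{equation*}
L_1(g_1,g_2)=\int_{-\infty}^{(\mu_1+\mu_2)/2}(g_1-g_2)\,dx+\int_{(\mu_1+\mu_2)/2}^{\infty}(g_2-g_1)\,dx.
\end{equation*}
Each of the four summands is a Gaussian tail which, after standardising $(x-\mu_i)/\sigma$, evaluates to $\phi$ at $\pm(\mu_2-\mu_1)/(2\sigma)$. Collecting them and using $\phi(-t)=1-\phi(t)$ yields the right-hand side of the lemma. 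The case $\mu_1>\mu_2$ is symmetric, which is the reason for the natural appearance of $\pm|\mu_2-\mu_1|/(2\sigma)$ in the final expression.

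As an internal consistency check one can stay inside the expectation form: splitting $\E|e^{X-c}-1|$ at the zero $X=c$ of the integrand, completing the square in each half, and invoking the elementary identity $\int_{-\infty}^a e^{bu}\varphi(u)\,du=e^{b^2/2}\phi(a-b)$ reduces the four resulting pieces to the same $\phi$-values. I expect the main difficulty to be little more than careful bookkeeping: one must keep track of the variance of $X$, of the centering constant $(\mu_2-\mu_1)^2/(2\sigma^2)$, and of the signs arising from standardisation, so that the middle expression of the lemma agrees cleanly with its geometric counterpart. No analytic subtlety arises beyond elementary Gaussian computations.
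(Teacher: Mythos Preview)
Your proposal is correct and follows essentially the same route as the paper: both compute the closed form by splitting the integral at $(\mu_1+\mu_2)/2$ and reducing the four pieces to values of $\phi$ at $\pm(\mu_2-\mu_1)/(2\sigma)$, and both obtain the expectation form by writing $L_1=\int g_j\,|g_i/g_j-1|$ and recognising the log-likelihood ratio as a centered Gaussian minus the constant $(\mu_2-\mu_1)^2/(2\sigma^2)$. The only cosmetic differences are the order of the two computations and that the paper integrates against $g_2$ (i.e.\ uses $g_1/g_2$) whereas you integrate against $g_1$; your optional consistency check via completing the square is extra and not needed.
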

\begin{proof}
Without loss of generality let us suppose that $\mu_1 \leq \mu_2$. Then we can write:
\begin{align*}
L_1(g_1,g_2)&=\int_{\R}|g_1(x)-g_2(x)|dx=\int_{-\infty}^{\frac{\mu_1+\mu_2}{2}}(g_1(x)-g_2(x))dx+
                               \int_{\frac{\mu_1+\mu_2}{2}}^{\infty}(g_2(x)-g_1(x))dx.
\end{align*}
Observe that
\begin{align*}
\int_{-\infty}^{\frac{\mu_1+\mu_2}{2}}g_1(x)dx=\p\bigg(\mathcal{N}(\mu_1,\sigma^2)\leq\frac{\mu_1+\mu_2}{2}\bigg)
                               =\p\bigg(\mathcal{N}(0,1)\leq\frac{\mu_2-\mu_1}{2\sigma}\bigg)
                               =\phi\bigg(\frac{\mu_2-\mu_1}{2\sigma}\bigg).
\end{align*}
Similarly one has
\begin{align*}\int_{\R}|g_1(x)-g_2(x)|dx&=\phi\bigg(\frac{\mu_2-\mu_1}{2\sigma}\bigg)-\phi\bigg(\frac{\mu_1-\mu_2}{2\sigma}\bigg)+
\bigg(1-\phi\bigg(\frac{\mu_2-\mu_1}{2\sigma}\bigg)\bigg)-\bigg(1-\phi\bigg(\frac{\mu_1-\mu_2}{2\sigma}\bigg)\bigg)\\
&=2\bigg[\phi\bigg(\frac{\mu_1-\mu_2}{2\sigma}\bigg)-\phi\bigg(\frac{\mu_2-\mu_1}{2\sigma}\bigg)\bigg]=2\bigg[1-2\phi\bigg(\frac{\mu_2-\mu_1}{2\sigma}\bigg)\bigg],
\end{align*}
thus,
\begin{equation*}
L_1(g_1,g_2)=2\bigg[1-2\phi\Big(\frac{\mu_2-\mu_1}{2\sigma}\Big)\bigg].
\end{equation*}
On the other hand we can also express the $L_1$-norm between $g_1$ and $g_2$ as
\begin{align*}
L_1(g_1,g_2)&= \frac{1}{\sqrt{2\pi}\sigma}\int_{\R}\bigg|\exp\bigg(-\frac{(x-\mu_1)^2}{2\sigma^2}\bigg)-\exp\bigg(-\frac{(x-\mu_2)^2}{2\sigma^2}\bigg)\bigg|dx\\
       &=\frac{1}{\sqrt{2\pi}\sigma}\int_{\R}\bigg|\exp\bigg(-\frac{(x-\mu_2)^2-2(\mu_1-\mu_2)(x-\mu_2)+(\mu_1-\mu_2)^2}{2\sigma^2}\bigg)-\exp\bigg(-\frac{(x-\mu_2)^2}{2\sigma^2}\bigg)\bigg|dx\\
&=\frac{1}{\sqrt{2\pi}\sigma}\int_{\R}\bigg|\exp\bigg(\frac{2(\mu_1-\mu_2)(x-\mu_2)-(\mu_1-\mu_2)^2}{2\sigma^2}\bigg)-1\bigg|\exp\bigg(-\frac{(x-\mu_2)^2}{2\sigma^2}\bigg)dx\\
&=\E \bigg|\exp\bigg(\frac{2(\mu_1-\mu_2)(Y-\mu_2)-(\mu_1-\mu_2)^2}{2\sigma^2}\bigg)-1\bigg|=\E \bigg|\exp\bigg(\frac{(\mu_1-\mu_2)Z}{\sigma}-\frac{(\mu_1-\mu_2)^2}{2\sigma^2}\bigg)-1\bigg|\\
&=\E\bigg|\exp\bigg(X-\frac{(\mu_1-\mu_2)^2}{2\sigma^2}\bigg)-1\bigg|,
\end{align*}
where $Y\sim\mathcal{N}(\mu_2,\sigma^2)$ et $Z\sim\mathcal{N}(0,1)$.
\end{proof}
\begin{proof}[Proof of Fact \ref{fact:processigaussiani}]
Thanks to the Girsanov theorem one has that, $\forall \omega\in C$ and $\forall t>0$
$$\frac{dP_t^{(m_i,\sigma^2,0)}}{dP_t^{(0,\sigma^2,0)}}(\omega)=\exp\Big(\int_0^t\frac{m_i(t)}{\sigma^2(t)}d\omega_t-\frac{1}{2}\int_I\frac{m_i^2(t)}{\sigma^2(t)}dt\Big)P^{(0,\sigma^2,0)}(d\omega)$$
In particular, $P_t^{(m_1,\sigma^2,0)}$ is absolutely continuous with respect to $P_t^{(m_2,\sigma^2,0)}$ and the density $g=\frac{\displaystyle dP_t^{(m_1,\sigma^2,0)}}{\displaystyle dP_t^{(m_2,\sigma^2,0)}}$ is given by:
\begin{align}\label{den}
g(\omega)=&\nonumber\exp\Big(\int_0^t\frac{m_1(t)-m_2(t)}{\sigma^2(t)}d\omega_t-\frac{1}{2}\int_0^t\frac{m_1^2(t)-m_2^2(t)}{\sigma^2(t)}dt\Big)\\
       =&\exp\Big(\int_0^t\frac{m_1(t)-m_2(t)}{\sigma^2(t)}(d\omega_t-m_2(t)dt)-\frac{1}{2}\int_0^t\frac{(m_1(t)-m_2(t))^2}{\sigma^2(t)}dt\Big).
\end{align} 
Let us denote by $(Z_t)_{t\geq 0}$ the stochastic process satisfying the following EDS:
$$dZ_t=m_2(t)dt+\sigma(t)dW_t, \quad t\geq 0,$$
with $(W_t)_{t\geq 0}$ a standard Brownian motion.
Then we have:,
\begin{align*}
L_1(P_t^{(m_1,\sigma^2,0)},P_t^{(m_2,\sigma^2,0)})&=\int \big|g(\omega)-1\big|\frac{dP_t^{(m_2,\sigma^2,0)}}{dP_t^{(0,\sigma^2,0)}}(\omega)P^{(0,\sigma^2,0)}(d\omega)\\
&=\E_{\p}\bigg|\exp\bigg(\int\frac{m_1(t)-m_2(t)}{\sigma^2(t)}(dZ_t-m_2(t)dt)
-\frac{1}{2}\int\frac{(m_1(t)-m_2(t))^2}{\sigma^2(t)}dt\bigg)-1\bigg|\\
&=\E_{\p}\bigg|\exp\bigg(\int\frac{(m_1(t)-m_2(t))}{\sigma^2(t)}\sigma(t)dW_t
-\frac{1}{2}\int\frac{(m_1(t)-m_2(t))^2}{\sigma^2(t)}dt\bigg)-1\bigg|.
\end{align*}
Observe that the random variable $\int_0^t\frac{(m_1(s)-m_2(s))}{\sigma(s)}dW_s$ has a Gaussian distribution $\mathcal{N}\displaystyle\Big(0,\int_0^t\frac{(\mu(s)-\nu(s))^2}{\sigma^2(s)}ds\Big)$, thus, by means of Lemma \ref{propL}, we can conclude that
$$L_1\Big(P_t^{(m_1,\sigma^2,0)},P_t^{(m_2,\sigma^2,0)}\Big)=2\bigg[1-2\phi\bigg(\frac{1}{2}\sqrt{\int_0^t\frac{(m_1(s)-m_2(s))^2}{\sigma^2(s)}ds}\bigg)\bigg].$$
\end{proof}

\begin{proof}[Proof of Fact \ref{fatto3}]
 In order to prove that $\delta(\mo_1,\mo_2)=0$ it is enough to consider the Markov kernel $M:(\X_1,\A_1)\to(\X_2,\A_2)$ defined as $M(x,B):=\I_B(S(x))$ $\forall x\in\X_1$ and $\forall B\in\A_2$. Conversely, to show that $\delta(\mo_2,\mo_1)=0$ one can consider the Markov kernel $K:(\X_2,\A_2)\to (\X_1,\A_1)$ defined as 
 $K(y,A)=\E_{P_{2,\theta}}(\I_A|S=y)$, $\forall A\in\A_1.$
 Since $S$ is a sufficient statistics, the Markov kernel $K$ does not depend on $\theta$. Denoting by $S_{\#}P_{1,\theta}$ the distribution of $S$ under $P_{1,\theta}$, one has:
 $$KP_{2,\theta}(A)=\int K(y,A)P_{2,\theta}(dy)=\int \E_{P_{2,\theta}}(\I_A|S=y)S_{\#}P_{1,\theta}(dy)=P_{1,\theta}(A).$$
 
\end{proof}

\subsection*{Acknowledgements}

I would like to thank my Ph.D. supervisor Sana Louhichi for several useful discussions. Special thanks go to the associate editor and the anonoymous referee for a very in-depth reading of the first version of this paper; their comments significantly improved the exposition of the paper.

\bibliographystyle{plain}
\bibliography{refs}

\begin{thebibliography}{10}

\bibitem{regression02}
Lawrence~D. Brown, T.~Tony Cai, Mark~G. Low, and Cun-Hui Zhang.
\newblock Asymptotic equivalence theory for nonparametric regression with
  random design.
\newblock {\em Ann. Statist.}, 30(3):688--707, 2002.
\newblock Dedicated to the memory of Lucien Le Cam.

\bibitem{BC04}
Lawrence~D. Brown, Andrew~V. Carter, Mark~G. Low, and Cun-Hui Zhang.
\newblock Equivalence theory for density estimation, {P}oisson processes and
  {G}aussian white noise with drift.
\newblock {\em Ann. Statist.}, 32(5):2074--2097, 2004.

\bibitem{BL}
Lawrence~D. Brown and Mark~G. Low.
\newblock Asymptotic equivalence of nonparametric regression and white noise.
\newblock {\em Ann. Statist.}, 24(6):2384--2398, 1996.

\bibitem{B98}
Lawrence~D. Brown and Cun-Hui Zhang.
\newblock Asymptotic nonequivalence of nonparametric experiments when the
  smoothness index is {$1/2$}.
\newblock {\em Ann. Statist.}, 26(1):279--287, 1998.

\bibitem{B}
Boris Buchmann and Gernot M{\"u}ller.
\newblock Limit experiments of {GARCH}.
\newblock {\em Bernoulli}, 18(1):64--99, 2012.

\bibitem{cmultinomial}
Andrew~V. Carter.
\newblock Deficiency distance between multinomial and multivariate normal
  experiments.
\newblock {\em Ann. Statist.}, 30(3):708--730, 2002.
\newblock Dedicated to the memory of Lucien Le Cam.

\bibitem{cregression}
Andrew~V. Carter.
\newblock A continuous {G}aussian approximation to a nonparametric regression
  in two dimensions.
\newblock {\em Bernoulli}, 12(1):143--156, 2006.

\bibitem{C2007}
Andrew~V. Carter.
\newblock Asymptotic approximation of nonparametric regression experiments with
  unknown variances.
\newblock {\em Ann. Statist.}, 35(4):1644--1673, 2007.

\bibitem{C2009}
Andrew~V. Carter.
\newblock Asymptotically sufficient statistics in nonparametric regression
  experiments with correlated noise.
\newblock {\em J. Probab. Stat.}, pages Art. ID 275308, 19, 2009.

\bibitem{tankov}
Rama Cont and Peter Tankov.
\newblock {\em Financial modelling with jump processes}.
\newblock Chapman \& Hall/CRC Financial Mathematics Series. Chapman \&
  Hall/CRC, Boca Raton, FL, 2004.

\bibitem{R2006}
Arnak Dalalyan and Markus Rei{\ss}.
\newblock Asymptotic statistical equivalence for scalar ergodic diffusions.
\newblock {\em Probab. Theory Related Fields}, 134(2):248--282, 2006.

\bibitem{rmultidimensionale}
Arnak Dalalyan and Markus Rei{\ss}.
\newblock Asymptotic statistical equivalence for ergodic diffusions: the
  multidimensional case.
\newblock {\em Probab. Theory Related Fields}, 137(1-2):25--47, 2007.

\bibitem{D}
Sylvain Delattre and Marc Hoffmann.
\newblock Asymptotic equivalence for a null recurrent diffusion.
\newblock {\em Bernoulli}, 8(2):139--174, 2002.

\bibitem{duval}
C{\'e}line Duval, Marc Hoffmann, et~al.
\newblock Statistical inference across time scales.
\newblock {\em Electronic Journal of Statistics}, 5:2004--2030, 2011.

\bibitem{sam96}
Sam Efromovich and Alex Samarov.
\newblock Asymptotic equivalence of nonparametric regression and white noise
  model has its limits.
\newblock {\em Statist. Probab. Lett.}, 28(2):143--145, 1996.

\bibitem{nostro}
Pierre Etoré, Sana Louhichi, and Ester Mariucci.
\newblock Asymptotic equivalence of jumps lévy processes and their discrete
  counterpart, 2013.
\newblock ArXiv:1305.6725.

\bibitem{C14}
Valentine Genon-Catalot and Catherine Laredo.
\newblock Asymptotic equivalence of nonparametric diffusion and euler scheme
  experiments.
\newblock {\em Ann. Statist.}, to appear.

\bibitem{CLN}
Valentine Genon-Catalot, Catherine Laredo, and Michael Nussbaum.
\newblock Asymptotic equivalence of estimating a {P}oisson intensity and a
  positive diffusion drift.
\newblock {\em Ann. Statist.}, 30(3):731--753, 2002.
\newblock Dedicated to the memory of Lucien Le Cam.

\bibitem{GN2010}
Georgi~K. Golubev, Michael Nussbaum, and Harrison~H. Zhou.
\newblock Asymptotic equivalence of spectral density estimation and {G}aussian
  white noise.
\newblock {\em Ann. Statist.}, 38(1):181--214, 2010.

\bibitem{GN}
Ion Grama and Michael Nussbaum.
\newblock Asymptotic equivalence for nonparametric generalized linear models.
\newblock {\em Probab. Theory Related Fields}, 111(2):167--214, 1998.

\bibitem{GN2002}
Ion Grama and Michael Nussbaum.
\newblock Asymptotic equivalence for nonparametric regression.
\newblock {\em Math. Methods Statist.}, 11(1):1--36, 2002.

\bibitem{GN2006}
Ion~G. Grama and Michael~H. Neumann.
\newblock Asymptotic equivalence of nonparametric autoregression and
  nonparametric regression.
\newblock {\em Ann. Statist.}, 34(4):1701--1732, 2006.

\bibitem{j03}
Michael J{\"a}hnisch and Michael Nussbaum.
\newblock Asymptotic equivalence for a model of independent non identically
  distributed observations.
\newblock {\em Statist. Decisions}, 21(3):197--218, 2003.

\bibitem{LC69}
Lucien Le~Cam.
\newblock {\em Théorie asymptotique de la décision statistique}.
\newblock Séminaire de Mathématiques Supérieures, No 33 (Été, 1968). Les
  Presses de l'Université de Montréal, Montreal, Que., 1969.

\bibitem{LeCam}
Lucien Le~Cam.
\newblock {\em Asymptotic methods in statistical decision theory}.
\newblock Springer Series in Statistics. Springer-Verlag, New York, 1986.

\bibitem{LC2000}
Lucien Le~Cam and Grace~Lo Yang.
\newblock {\em Asymptotics in statistics}.
\newblock Springer Series in Statistics. Springer-Verlag, New York, second
  edition, 2000.
\newblock Some basic concepts.

\bibitem{esterdiffusion}
Ester Mariucci.
\newblock Asymptotic equivalence of discretely observed diffusion processes and
  their euler scheme: small variance case.
\newblock {\em Stat. Inference Stoch. Process}, 2015.
\newblock DOI:10.1007/s11203-015-9117-x.

\bibitem{M2011}
Alexander Meister.
\newblock Asymptotic equivalence of functional linear regression and a white
  noise inverse problem.
\newblock {\em Ann. Statist.}, 39(3):1471--1495, 2011.

\bibitem{R2013}
Alexander Meister and Markus Rei{\ss}.
\newblock Asymptotic equivalence for nonparametric regression with non-regular
  errors.
\newblock {\em Probab. Theory Related Fields}, 155(1-2):201--229, 2013.

\bibitem{NM}
Grigori Milstein and Michael Nussbaum.
\newblock Diffusion approximation for nonparametric autoregression.
\newblock {\em Probab. Theory Related Fields}, 112(4):535--543, 1998.

\bibitem{N96}
Michael Nussbaum.
\newblock Asymptotic equivalence of density estimation and {G}aussian white
  noise.
\newblock {\em Ann. Statist.}, 24(6):2399--2430, 1996.

\bibitem{R2008}
Markus Rei{\ss}.
\newblock Asymptotic equivalence for nonparametric regression with multivariate
  and random design.
\newblock {\em Ann. Statist.}, 36(4):1957--1982, 2008.

\bibitem{R11}
Markus Rei{\ss}.
\newblock Asymptotic equivalence for inference on the volatility from noisy
  observations.
\newblock {\em Ann. Statist.}, 39(2):772--802, 2011.

\bibitem{ro04}
Angelika Rohde.
\newblock On the asymptotic equivalence and rate of convergence of
  nonparametric regression and {G}aussian white noise.
\newblock {\em Statist. Decisions}, 22(3):235--243, 2004.

\bibitem{strasser}
Helmut Strasser.
\newblock {\em Mathematical theory of statistics}, volume~7 of {\em de Gruyter
  Studies in Mathematics}.
\newblock Walter de Gruyter \& Co., Berlin, 1985.
\newblock Statistical experiments and asymptotic decision theory.

\bibitem{tsy}
Alexandre~B. Tsybakov.
\newblock {\em Introduction to nonparametric estimation}.
\newblock Springer Series in Statistics. Springer, New York, 2009.
\newblock Revised and extended from the 2004 French original, Translated by
  Vladimir Zaiats.

\bibitem{wang02}
Yazhen Wang.
\newblock Asymptotic nonequivalence of {G}arch models and diffusions.
\newblock {\em Ann. Statist.}, 30(3):754--783, 2002.
\newblock Dedicated to the memory of Lucien Le Cam.

\end{thebibliography}

\end{document}